\DeclareMathAlphabet{\mathbbx} {U}{bbold}{b}{n}
\newcommand{\zero}{\mathbbx 0}
\newcommand{\one}{\mathbbx 1}
\def\R{\mathbb{R}}
\def\Z{\mathbb{Z}}
\def\cA{\mathcal{A}}
\def\cE{\mathcal{E}}
\def\cF{\mathcal{F}}
\def\cI{\mathcal{I}}
\def\cN{\mathcal{N}}
\def\cS{\mathcal{S}}
\def\cT{\mathcal{T}}
\def\b{\beta}
\def\d{\delta}
\def\k{\kappa}
\def\p{\partial}
\def\veps{\varepsilon}
\def\Eh{\mathcal{E}_h}
\def\Vh{V_\rod^h}
\def\Qh{{Q}_h}
\newcommand{\dv}[1]{\,{\mathrm d}#1}
\newcommand{\wcheck}[1]{#1\hspace{-.8ex}\mbox{\huge {\lower.45ex \hbox{$\textstyle \check{}$}}} \hspace{.5ex}}
\DeclareMathOperator{\supp}{supp}
\DeclareMathOperator{\id}{id}
\let\oldmarginpar\marginpar
\renewcommand\marginpar[1]{
  \oldmarginpar[\raggedleft\footnotesize #1]
  {\raggedright\footnotesize #1}}
\newtheorem{definition}{Definition}
\newtheorem{lemma}[definition]{Lemma}
\newtheorem{proposition}[definition]{Proposition}
\newtheorem{remark}[definition]{Remark}
\newtheorem{algorithm}[definition]{Algorithm}
\numberwithin{definition}{section}
\def\bc{{\rm bc}}
\def\rod{{\rm rod}}
\def\cb{c_{\rm b}}
\def\ct{c_{\rm t}}
\def\hI{\widehat{I}}
\def\tk{{\widetilde{k}}}
\newcommand{\abs}[1]{\ensuremath{\left\lvert#1\right\rvert}}
\newcommand{\br}[1]{\ensuremath{\left(#1\right)}}
\newcommand{\norm}[1]{\ensuremath{\left\lVert#1\right\rVert}}
\newcommand{\tnorm}[1]{\ensuremath{\lVert\mbox{$#1$}\rVert}}
\renewcommand{\sp}[1]{\ensuremath{\left\langle#1\right\rangle}}
\newcommand{\sq}[1]{\ensuremath{\left[#1\right]}}
\definecolor{labelkey}{gray}{.5}
\DeclareMathOperator{\am}{am}
\newcommand{\bh}{b_{h}}
\newcommand{\bhk}{b_{h}}
\DeclareMathOperator{\cn}{cn}
\newcommand{\ccN}{{N}}
\newcommand{\curve}{y}
\newcommand{\curveh}{y_{h}}
\newcommand{\curvehk}{y_{h}}
\renewcommand{\d}{\ensuremath{\,\mathrm{d}}}
\newcommand{\eps}{\ensuremath{\varepsilon}}
\newcommand\elbc{\ell_{\bc}}
\renewcommand{\frame}{F}
\newcommand{\heins}{H^{1}}
\newcommand{\hzwei}{H^{2}}
\renewcommand{\kappa}{\varkappa}
\DeclareMathOperator{\Lk}{Lk}
\newcommand{\lzwei}{}
\renewcommand{\rho}{\ensuremath{\varrho}}
\newcommand\rzd{((0,L),\R^{3})}
\newcommand{\TP}{\mathrm{TP}}
\DeclareMathOperator{\Tw}{Tw}
\DeclareMathOperator{\Wr}{Wr}
\newcommand{\energy}{I_\rod}
\newcommand{\class}[1][\empty]{\cA\ifx#1\empty\else_{#1}\fi}
\newcommand{\cbend}{c_{\mathrm b}}
\newcommand{\ctwist}{c_{\mathrm t}}
\renewcommand{\Eh}{I_\rod^{h,\veps}}
\newcommand{\proj}[1][]{P_{#1}}
\newcommand{\Hr}{H_{\mathrm{r}}}
\newcommand{\length}{\mathscr{L}}
\newcommand{\ini}{\mathrm{ini}}
\DeclareRobustCommand\widecheck[1]{{\mathpalette\@widecheck{#1}}}
\def\@widecheck#1#2{%
    \setbox\z@\hbox{\m@th$#1#2$}%
    \setbox\tw@\hbox{\m@th$#1%
       \widehat{%
          \vrule\@width\z@\@height\ht\z@
          \vrule\@height\z@\@width\wd\z@}$}%
    \dp\tw@-\ht\z@
    \@tempdima\ht\z@ \advance\@tempdima2\ht\tw@ \divide\@tempdima\thr@@
    \setbox\tw@\hbox{%
       \raise\@tempdima\hbox{\scalebox{1}[-1]{\lower\@tempdima\box
\tw@}}}%
    {\ooalign{\box\tw@ \cr \box\z@}}}
\def\TP{{\rm TP}}
\newcommand{\korr}[1]{#1}
\begin{document}
\title[Numerical solution of a bending-torsion model]
{Numerical solution \\ of a bending-torsion model \\  for elastic rods}
\author[S. Bartels]{S\"oren~Bartels}
 \address{Abteilung f\"ur Angewandte Mathematik,  
 Albert-Ludwigs-Universit\"at Freiburg, Hermann-Herder-Str.~10, 
 79104 Freiburg i.~Br., Germany}
 \email{bartels@mathematik.uni-freiburg.de}
\author[Ph. Reiter]{Philipp~Reiter}
\address{Institut f\"ur Mathematik, Martin-Luther-Universit\"at Halle-Wittenberg, 06099 Halle (Saale), Germany}
 \email{philipp.reiter@mathematik.uni-halle.de}
\date{\today}
\renewcommand{\subjclassname}{%
  \textup{2010} Mathematics Subject Classification}
 \subjclass[2010]{65N12 
 (57M25 
 65N15 
 65N30 
 {74K10}
 )}
 \begin{abstract}
  Aiming at simulating elastic rods,
  we discretize a rod model based on {a general theory
  of hyperelasticity}
  for inextensible and unshearable rods.

  After reviewing this model
  {and discussing topological effects of periodic rods},
  we prove convergence of the discretized functionals
  and stability of \korr a corresponding discrete flow.
  
  Our experiments numerically confirm thresholds e.g.\@
  for Michell's instability and indicate a complex
  energy landscape, in particular in the presence of impermeability.
 \end{abstract}
 \keywords{Self-avoidance, curves, {rods, stability,
 bending energy, twisting energy, knot energy, elastic rods, tangent-point energies, C\u alug\u areanu's identity, Michell's instability}}
\maketitle

\setcounter{tocdepth}1
\tableofcontents

\section{Introduction}

Long slender objects---such as springy wires made of plastic or metal---can be approximated by curves.
In many cases, equilibrium shapes are characterized in terms of the
bending energy, i.e., {(half of)} the total squared curvature.
The latter has a long history, dating back to Bernoulli,
and can be seen as the starting points of elasticity theory.

The bending energy depends just on the centerline of an object
and does not incorporate other physical effects such as
twisting, friction, or shear. For instance,
only relying on the bending energy one
cannot explain why a telephone cable tends to curl.
It also does not preclude self-penetration.

In this paper we extend the study of inextensible elastic curves
by the first author~\cite{Bar13}
to inextensible and unshearable elastic rods.
To this end we discretize
the minimization problem
\begin{equation}\label{eq:P}
\tag{${\rm P}_\rod$} 
\left\{ 
\begin{array}{l}
\text{Minimize}\quad \displaystyle{I_\rod[y,b] =  
\frac{{\cb}}{2} \int_0^L |y''|^2 \dv{x} 
+ \frac{{\ct}}{2} \int_0^L \br{b'\cdot(\curve'\times b)}^2 \dv{x}} \\[3mm]
\text{in the set} \ \cA = \{(y,b)\in H^{2}\times H^{1}: 
L_\bc^\rod[y,b] = \elbc^\rod, \\[2.5mm]
\qquad\qquad \qquad\qquad\qquad\qquad |y'| = |b| = 1, \curve'\perp b \big\}.
\end{array}\right.\!\!\!\!\!
\end{equation}
and devise a numerical
scheme in order to simulate a suitable $H^{2}$-like gradient flow.

Here
{$\cb,\ct>0$
are bending and torsion rigidities that are determined 
by the Lam\'e coefficients of the material and geometrical properties of the rod.
Furthermore,} $L_{\mathrm{bc}}^{\rod}: H^{2}\times H^{1}\to Y$
encodes the boundary data $\elbc^{\rod}$
in some finite-dimensional linear space $Y$.
We assume that it
only involves linear combinations of boundary points
of $\curve$, $\curve'$, and $b$.
Therefore $L_{\mathrm{bc}}^{\rod}$ is continuous with respect to weak convergence in $H^{2}\times H^{1}$.
In particular $L_{\mathrm{bc}}^{\rod}$
can be used to incorporate periodicity in case of a closed curve~$\curve$.

{For ease of readability, we will rescale $I_{\rod}$ by $1/\ct$
from now on and abbreviate
\[ \k = {\cb}/{\ct}. \]}

We will always assume $\cA$ to be nonempty which
is guaranteed if the boundary data $\elbc^{\rod}$
{is compatible with the frame condition and}
implies that the
distance between the endpoints is strictly less than $L$.
Any boundary data on a frame $F$ can be matched by adjusting a reference frame $F_{0}$ using a suitable cumulative angle function $\varphi$
(see Section~\ref{sect:reframe} below).

\bigskip

\subsection*{Elastic rods}

Based on the work of Mora and M\"uller~\cite{MorMul03}
{for general rods},
the minimization problem~\eqref{eq:P} can be {rigorously} derived from a general three-dimensional
hyperelastic model,
{see~\cite{Bar19} for a short formal derivation}.
{In the situation of rods with circular cross-section,
made of some isotropic and homogeneous material,
we find} {that}
$\cb\ge2\ctwist$.
According to Coleman and Swigon~\cite[p.~195]{CS1}
there is some indication that
values less than $\kappa=\tfrac32$ are appropriate for modeling DNA.

The study of elastic rods has a long history.
It is closely related to elasticae, i.e., stationary points
of the bending energy, see Levien~\cite{levien} and references therein.
A comprehensive presentation on the subject from the perspective
of elasticity theory is provided by Antman~\cite{antman}.

We find applications in different fields such as
the modeling of
coiling and kinking of submarine cables (Zajac~\cite{zajac}; Goyal et al.~\cite{GPL1,GPL2}),
cell filaments (Manhart et al.~\cite{MOSS15}),
and computer graphics (Bergou et al.~\cite{BWRAG}; Spillmann and Teschner~\cite{ST}).
Modeling in molecular biology has stimulated a lot of activity
in this field as well.

A prototypical model for DNA supercoiling which has received considerable attention
is the \emph{twisted elastic ring}
investigated by Maddocks in various collaborations~\cite{DLM,KM,KM,maddocks,MM,MRM}.
The solution of the corresponding minimization problem
leads to an intrinsically straight uniform rod with equal bending stiffness.
The analysis bases on Hamiltonian formulations of rod mechanics.
The general idea is to impose a twist rate $\beta$ on a unit loop.

For small values of $\beta$ the round circle (with a uniform twist) remains an equilibrium.
This phenomenon is known as \emph{Michell's instability}~\cite{michell},
see Goriely~\cite{goriely} and references therein. Larger perturbations
lead to instability and bifurcation phenomena,
cf.\@ Goriely and Tabor~\cite{GT1,GT2}.

Ivey and Singer~\cite{ivey-singer} reconsidered the problem from a variational
point of view, obtaining a complete description of the space of closed and quasiperiodic minimizers.
Recently, a reformulation in terms of symplectic geometry has been given by Needham~\cite{needham}.
Regarding the discretization of elasticae we refer to Scholtes et al.~\cite{SSW}.

\subsection*{Gradient flows}

We aim at numerically detecting configurations of framed curves
with low bending and twisting energy. For this we consider the
gradient flow of \korr the energy functional in~\eqref{eq:P},
a weighted sum of an elastic bending 
energy term and a functional that tracks the twisting of the frame
about its centerline.

{Recently}, gradient flows involving the bending energy have received much attention,
with respect to rigorous analysis, see Dziuk et al.~\cite{DzKuSc02}
as well as regarding discretization aspects,
see Deckelnick and Dziuk~\cite{DeDz09},
Barrett et al.~\cite{BaGaNu12},
Bartels~\cite{Bar13},
Dall'Acqua et al.~\cite{DaLiPo14},
Pozzi and Stinner~\cite{PozSti17}.
Lin and Schwetlick~\cite{LinSchw05} also include frames in their model.

We implement a constraint ensuring that the curves stay close to arclength parametrization
if the initial curve is arclength parametrized.
Moreover the bending energy can be replaced by the squared $L^{2}$ norm
of the second derivative of the curve
which is a crucial point in the analysis of the discretization.

\subsection*{Impermeability}

Based on earlier work aiming at modeling DNA plasmids~\cite{coleman92,CST,coleman96,TSC},
Coleman and Swigon~\cite{CS1} take self-contact phenomena into account and
discuss the interaction between certain topological quantities
such as writhe, excess link, and the number of self-contact points.
In~\cite{CS2} they include the case of (two-bridge) torus knots.
In contrast to a related approach by Starostin~\cite{starostin} they impose a (small) positive thickness.

The corresponding case of open curves with appropriate boundary conditions
has also been studied by several authors.
Van der Heijden et al.~\cite{vdHNGT} provide a
comprehensive study of jump phenomena in clamped rods with and without self-contact.
A more detailed classification of the respective equilibrium configurations
is given by Neukirch and Henderson~\cite{NH}.
Clauvelin, Audoly, and Neukirch~\cite{CAN} modeled
the situation of a small loosely knotted arc with open end-points
and studied the shape of the set of self-contact and the
influence of twist applied to the end-points as well.
Starostin and van der Heijden~\cite{SvdH} model
the situation of so-called two-braids,
i.e., structures formed by two elastic rods winding around each other,
which also covers the case of $(2,b)$-torus knots~\cite{SvdH2,SvdH18}.
The dynamic evolution of intertwined clamped loops
subject to varying loads
has been addressed by Goyal et al.~\cite{GPL2,GPL1}.

\bigskip

Some of the above-mentioned models involve
initial assumptions on the geometry,
especially regarding the contact situation,
focussing on explicit constructions for modeling and simulation.
Our approach of treating~\eqref{eq:P} does not rely on any precondition.

We will redefine~\eqref{eq:P} in Section~\ref{sect:imper}
to incorporate impermeability. To this end, we rely on
the tangent-point energies whose impact on the evolution
of (unframed) curves has been discussed in~\cite{bartels-reiter}.
We thereby extend a regularization ansatz due to von der Mosel~\cite{vdM:meek} with
O'Hara's energies~\cite{oha:en-fam} in place of the tangent-point functional.
In fact, one might conjecture that any self-avoiding functional
will qualitatively produce the same results.

Computationally, this case is particularly challenging since strong
forces related to bending effects have to by compensated by repulsive
forces related to the tangent-point functional to avoid self-intersections.
Regularization approaches guaranteeing global injectivity
have been successfully implemented in different fields,
see Kr\"omer and Valdman~\cite{kroemer-valdman} for an example in
the context of elasticity.

The existence of curves minimizing~\eqref{eq:P}
{follows via the direct method of the calculus of variations
or, equally, from the Gamma-convergence (Proposition~\ref{prop:gamma})
together with the coercivity of the functionals.
In the presence of uniform thickness bounds, however,
we cannot rely on these reasoning, see}
Gonzalez et al.~\cite{gmsm}.
While~\eqref{eq:P} covers the ``uniform symmetric case'' of the Kirchhoff rod,
which constitutes maybe the simplest model that involves both bending and twisting,
the setting discussed in~\cite{gmsm} offers more flexibility
and especially also covers the cases of extensible shearable rods.
Schuricht and von der Mosel~\cite{heiko2} derived the Euler--Lagrange equations
for elastic rods with self-contact.
A similar approach has been followed by Hoffman and Seidman~\cite{HS1,HS2}.

The evolution of impermeable rods preserves isotopy classes,
so topology aspects come into play.
Here we will encounter a more involved picture compared
to the analysis of the twist-free setting, see Langer and Singer~\cite{langer-singer:cs} and Gerlach et al.~\cite{GRvdM}.
A complete characterization of minimizers is wide open.

\subsection*{Outline}

We review the geometry of elastic rods
in Section~\ref{sect:rods}.
In Section~\ref{sect:approx} we derive an approximation result
(Lemma~\ref{lem:approx})
that is used in Section~\ref{sect:discrete}
in order to prove Gamma-convergence of the discrete problem
to the continuous one (Proposition~\ref{prop:gamma}).
We prove stability of the numerical scheme in Section~\ref{sect:scheme}
(Proposition~\ref{prop:stability}).
Several experiments discussed in Section~\ref{sect:exper}
indicate a complex energy landscape.

\subsection*{Notation}
The inner products corresponding to $L^{2}$, $H^{1}$, $H^{2}$ are denoted by
$\br{\cdot,\cdot}_{\lzwei}$,
$\br{\cdot,\cdot}_{\heins}$,
and $\br{\cdot,\cdot}_{\hzwei}$, respectively.
The norms are written accordingly.
Constants may change from line to line.

\section{Elastic rods}\label{sect:rods}

Here we provide a \korr{short} presentation of the geometry of elastic rods
which is inspired by Langer, Singer and Ivey~\cite{langer-singer,ivey-singer}.
It is not essential for the analysis of the numerical scheme
in the subsequent sections
but sheds some light on the interpretation of the experiments in the last section.

\subsection{Framed curves}

A \emph{rod} is modeled by a curve
$\curve:[0,L]\to\R^{3}$ which corresponds to its
centerline and an orthonormal \emph{frame}
$\frame:[0,L]\to\mathrm{SO}(3)$
whose columns $\frame=[t,b,d]$ are called \emph{directors}.
Of course, $d = t\times b$ where $\times$ denotes the vector cross product.
In the following we {consider} $\curve\in H^{2}$ and $F\in H^{1}$.

We will assume that the first column of $\frame$
coincides with the unit tangent $t(x) = \left.\curve'(x)\middle/\abs{\curve'(x)}\right.$, $x\in [0,L]$.
The idea is that the directors $b$ and $d$ track the twisting of
the material about the centerline.

{The assumed energy regime for bending stiffness in~\eqref{eq:P}
imposes inextensibility as a physical property.
Therefore} we can prescribe
arc\-length parametrization which leads \korr{to the unit tangent vector $t=\curve'$
and the curvature $k=\abs{t'}=\abs{\curve''}$}.

Our analysis also covers the case of closed rods where $[0,L]$
is understood to be the periodic interval $\R/L\Z$.
We will realize the latter by imposing suitable
(periodic) boundary conditions at $0$ and~$L$.
In general {a twist-free} frame {(see Section~\ref{sect:reframe} below)} of a closed curve will not close up,
i.e., there can be a discontinuity at one point of $\R/L\Z$.
One has to take care of this fact when defining boundary conditions.

An important frame that will always be well-defined and
continuous for sufficiently smooth (both open and closed) curves $\curve$ with nonvanishing curvature
is the \emph{Fr\'enet frame} where $b_{\mathrm F} = t'/\abs {t'}$.

A rod is assumed to have some small diameter
which can be considered infinitesimal; however, self-penetrations
are not excluded at this stage (see Section~\ref{sect:imper} below
for a discussion on modeling impermeability).

\subsection{Twist rate}

Using the orthonormality of the frame, we may express the {variation} of the director $b$ by
\[ b' = (b'\cdot t)t + {(b'\cdot b)}b + (b'\cdot d)d
= -(b\cdot\curve'')t + (b'\cdot d)d. \]
The first term tracks the change of $b$ that is imposed by the
{spatial behavior} of the curve. It is just a component of the curvature vector
as $y''={(y''\cdot b)}b + (y''\cdot d)d$.
Only the second one actually provides information
about the twisting of the frame about the centerline.
Therefore we will call $b'\cdot d$ the \emph{twist rate} of the frame.

We may also characterize a frame by a $9\times 9$ linear system, namely
\[ \begin{pmatrix} t\\b\\d \end{pmatrix}'
= \begin{pmatrix} \zero&k_{b}\one&k_{d}\one\\-k_{b}\one&\zero&\beta\one\\-k_{d}\one&\beta\one&\zero \end{pmatrix}\begin{pmatrix} t\\b\\d \end{pmatrix}
 \]
for scalar coefficient functions $k_{b}$, $k_{d}$, and $\beta$
where $\zero,\one\in\R^{3\times3}$ denote the zero and identity matrices.
Here $k_{b}=y''\cdot b$ and $k_{d}=y''\cdot d$ are the components of the curvature $k$ of $\curve$
and $\beta=b'\cdot d$ is the twist rate.
For instance, the Fr\'enet frame is characterized by $k_{d}\equiv0$.

A more detailed discussion of the impact of the twist rate
is given in Section~\ref{sect:tot-twist} below.

\subsection{Reference frame}
\label{sect:reframe}

For any curve $\curve$, a point $\xi\in[0,L]$, and $\widehat b\in\mathbb S^{2}$, $\widehat b\perp\curve'(\xi)$,
we obtain by integration a unique frame $F_{0}=[t_{0},b_{0},d_{0}]$ for $\curve$ with
$F_{0}(\xi)=[\curve'(\xi),\widehat b,\curve'(\xi)\times\widehat b]$ whose twist rate
is constantly zero. We call it synonymously a \emph{Bishop frame}, \emph{natural frame},
\emph{reference frame},
or \emph{twist-free} frame for $\curve$
as it is a frame in rest position subject to a fixed curve.
Therefore, up to a rotation of the initial vector $\widehat b$
(which corresponds to an element of $\mathbb S^{1}$) there is a unique twist-free frame
for any given curve.

A twist-free frame $F_{0}$ provides a useful reference configuration.
Denoting the (cumulative) angle between the director $b$ of any other
frame and $b_{0}$ by $\varphi$, we arrive at
$b=(\cos\varphi)b_{0}+(\sin\varphi)d_{0}$ and
$d=-(\sin\varphi)b_{0}+(\cos\varphi)d_{0}$.
Consequently, the rate of change of $\varphi$ is just the twist rate
\begin{equation}\label{eq:twistrate}
 \varphi' = b'\cdot d = \beta.
\end{equation}

Two frames that just differ by a constant angle $\varphi$
may be considered equivalent, in particular when modeling
rods with a circular diameter where there is no natural choice
of a director. This is of course different for small ribbons
with lateral extension in a particular direction.

 Note that even for closed curves with frames that close up,
$\varphi(L)-\varphi(0)$ {does not need to} be an integer multiple of $2\pi$, {unless the twist-free reference frame closes up. The latter applies in particular to
rods with planar centerline where
the vector being perpendicular to the respective plane
provides a ``canonical'' twist-free frame.}

In general, there is no {direct} correlation between $\varphi(L)-\varphi(0)$ and
the angle {enclosed by} $ b(0)$ and $ b(L)$.
{One can think of a revolute joint that controls
the latter angle.}

\subsection{Total twist}
\label{sect:tot-twist}

An important quantity, in the literature often simply referred to as ``twist'', is the \emph{total twist} (more precisely, total twist rate)
\begin{align*}
 \Tw(\curve,b)
 &= \frac{\varphi(L)-\varphi(0)}{2\pi}
 = \frac1{2\pi}\int_{0}^{L} \varphi'(s) \d s
 = \frac1{2\pi}\int_{0}^{L} \beta(s) \d s \\
 &= \frac1{2\pi}\int_{0}^{L} b'(s)\cdot d(s) \d s
 = \frac1{2\pi}\int_{0}^{L} \det\br{\curve'(s),b(s),b'(s)}\d s
\end{align*}
where $s$ is an arclength parameter and the last expression is
parametrization invariant.

As the first identity suggests,
the total twist can be interpreted as the number of rotations
the director $b$ (or, equivalently, $d$) performs about
the curve, i.e., the centerline of the rod.

{The total twist takes \emph{integer} values
on any closed curve for which both frame and twist-free reference frame close up.
In particular, this holds for any \emph{planar} closed curve with a closed frame.}

\subsection{C\u alug\u areanu's identity}
\label{sect:calu}

A given (sufficiently smooth) embedded closed curve $\curve$
together with a closed frame $b$
(i.e., there are no discontinuities of the frame)
defines a \emph{link} consisting of $\curve$ and $\curve+\eps b$ for some
small $\eps>0$.
For embedded {closed} curves,
C\u alug\u areanu's identity~\cite{calu59,calu61}
{\[ \Lk = \Tw + \Wr \]}%
provides a decomposition of
the Gauss linking number {$\Lk$} (an integer topological invariant,
a special case of the mapping degree)
into the sum of two geometric terms (that can take arbitrary real values each),
namely the total twist {$\Tw$} and the writhe functional~{$\Wr$}.
{See Moffatt and Ricca~\cite{moffatt-ricca} for
an account on the history of this result.

The Gauss linking number amounts to
half of the sum of all signed crossings
of $\curve$ and $\curve+\eps b$ with respect to a
\emph{regular} projection direction.
The latter guarantees that we can identify any crossing either as an {over-} or {undercrossing}, respectively.
{To this end, the intersection of (the images of)
$\curve$ and $\curve+\eps b$ must be empty.}
It was Gauss' seminal discovery that this quantity
can also be expressed by a double integral over $[0,L]$,
see, e.g., the nice review by Ricca and Nipoti~\cite{RN}.
{For two curves $\curve,\widetilde\curve$ forming a link,
such as $\widetilde\curve=\curve+\eps b$, we have
\[
\Lk(\curve,\widetilde\curve) = \tfrac1{4\pi}\iint_{[0,L]^{2}}\frac{\det\br{\curve(x)-\widetilde\curve(\widetilde x),\curve'(x),\widetilde\curve'(\widetilde x)}}{\abs{\curve(x)-\widetilde\curve(\widetilde x)}^{3}}
\d x\d\widetilde x
\]
which is independent under reparametrization.}%

Any crossing of $\curve(x)$ and $\curve(x')+\eps b(x')$
where the preimages $x,x'\in[0,L]$ are close
will be called \emph{local} and \emph{global} otherwise.
Given any regular projection direction,
we can distinguish between local and global
contributions to the linking number, see Figure~\ref{fig:crossings}.
Note that, in contrast to the linking number, its local and global
contributions are not invariant for any regular projection
direction in general.
By Sard's theorem, almost every direction is regular, so
we can just compute the average of local and global
contributions over the sphere of all projection directions.
The first one agrees with the total twist,
the second one is the \emph{writhe} functional.
For details we refer to Dennis and Hannay~\cite{dennis-hannay}
and references therein.

\begin{figure}
 \includegraphics[scale=.2,trim=180 380 140 280,clip]{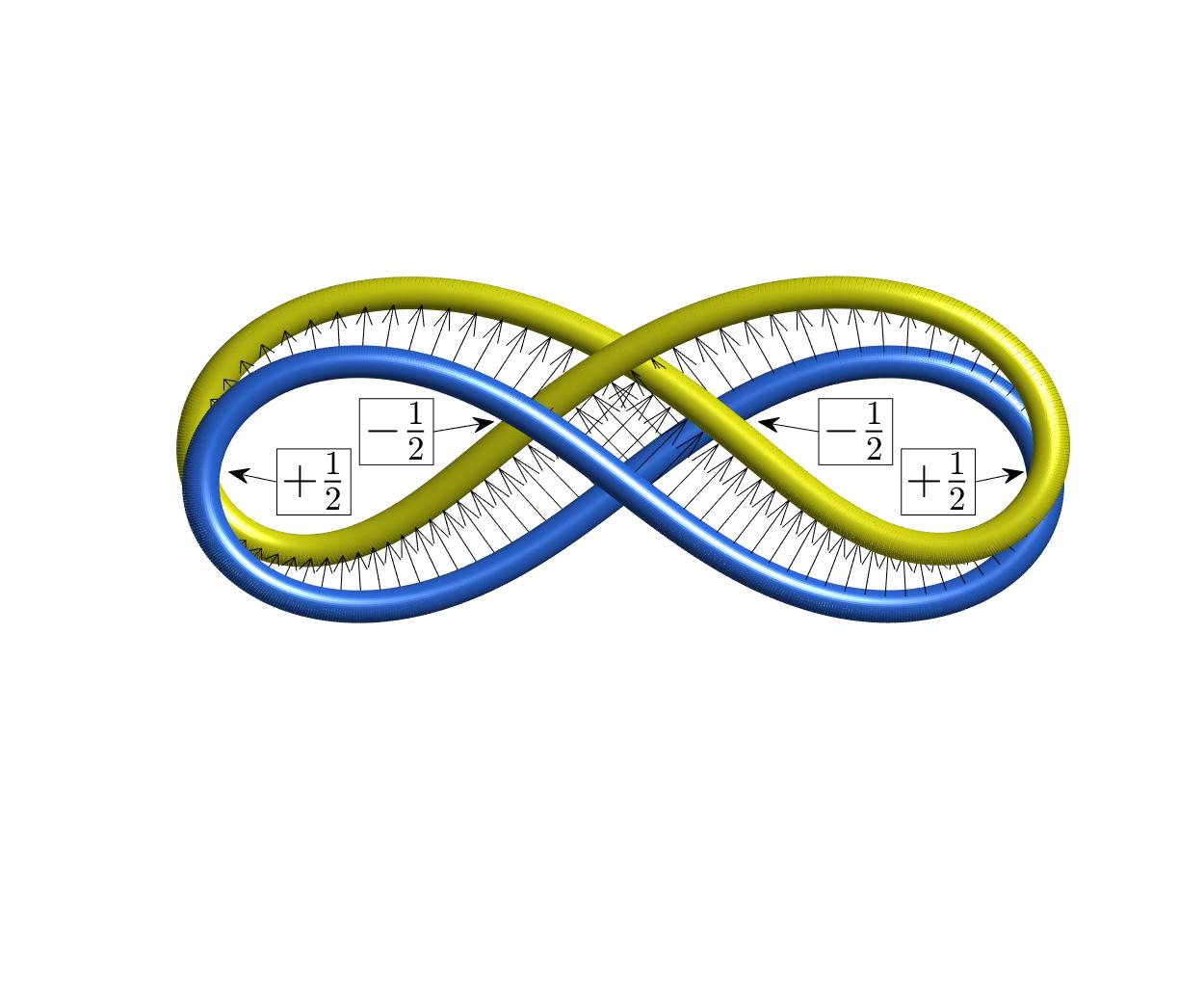}
 \caption{A curve $y$ and a director $b$ produce a link
 consisting of the curves $y$ and $y+\eps b$ for some small $\eps>0$.
 In the projection shown here we see four \emph{mutual} intersections of
 the first with the second curve.
 On the left and right margin we see a local crossing (overcrossing).
 This is a contribution to the total twist which amounts to~$1$ in this case.
 At the center there are two global crossings (undercrossing).
 {These count for the writhe functional.
 In this particular example we have $\Lk=0$,
 $\Tw=-\Wr\approx0.95$.}}
 \label{fig:crossings}
\end{figure}

Writhe can equally be computed by
summing up the signs of \emph{self}-crossings of any projection of the curve and averaging over all projection directions.
In particular, it only depends on the curve, not on the frame,
{but in contrast to linking number and total twist
it requires $\curve$ to be embedded.
It can also} be represented by the Gauss double integral
{via
\[ \Wr(\curve) = \Lk(\curve,\curve). \]
Writhe measures non-planarity (and non-sphericity) of a curve;
in particular, it vanishes on planar (and spherical) embedded curves.

Consequently, the total twist will be close to the Gauss linking number
for embedded curves that are nearly planar
(which occur at several stages of the experiments in Section~\ref{sect:exper}).
So we can check the values of the total twist
by computing the Gauss linking number,
i.e., counting signed crossings.
We can remove the claim of embeddedness of the centerline by
relating crossings of $\curve$ and $\curve+\eps b$
to the number of rotations of the director about the centerline.}

Of course, there is no ``writhe-free'' frame,
but, as suggested by Maddocks, we can geometrically construct a ``writhe frame''
whose linking number is zero, see~\cite{dennis-hannay}.}

Compared to the other two functionals in C\u alug\u areanu's identity,
which can be evaluated using double integrals,
the total twist is much easier to compute.
Furthermore it does not require embeddedness of the curve.

\subsection{Energies}

We assume that the behavior of the rod is driven by a linear combination
of the \emph{bending energy} ({half of the} total squared curvature)
and the \emph{twisting energy} ({half of the} total squared twist rate),
{cf.\@ Mora and M\"uller~\cite{MorMul03}}.
More precisely we consider the functional
\begin{equation}\label{eq:elastic}
 (\curve,b) \mapsto \frac\cbend2\int_{0}^{L} k(s)^{2}\d s + \frac\ctwist2\int_{0}^{L} \beta(s)^{2}\d s
\end{equation}
where $s$ is an arclength parameter, $k(s)$ denotes the curvature of $\curve$ at $\curve(s)$,
and $\cbend,\ctwist{{}>{}}0$ are material constants.
Minimizers are called \emph{elastic rods}.

{As mentioned in the \korr introduction, we rescale the energy functional by $1/\ct$ and define
\(
\k = {\cb}/{\ct}. \)}

At the end of this section, we will briefly discuss two
related minimization issues.

\subsection{Optimal frames}
\label{sect:optframe}

 For a given curve $\curve$, we may consider the problem to
 find a director $b$ minimizing $\energy[y,\cdot]$ subject to
 the boundary condition $L_\bc^\rod[y,b] = \elbc^\rod$.
 
 In first place, if $b$ is a stationary point of $\energy[\curve,\cdot]$
 for some fixed $\curve$ then
 \begin{equation}\label{eq:dubois}
  \beta \equiv \varphi' \equiv \tfrac{2\pi}L\Tw(\curve,b)
 \end{equation}
 is constant due to du Bois-Reymond's lemma.
 {According to the Cauchy--Schwarz inequality,
 the twisting energy is bounded below by $\frac{2\pi^{2}}L\Tw(y,b)^{2}$.
 This minimum is attained if and only if~\eqref{eq:dubois} holds
 such that the twisting energy then amounts to
 $\frac{2\pi^{2}}L\Tw(y,b)^{2} = \tfrac L2\beta^{2}$. 
 In particular, we can check whether a given rod
 has a uniform twist rate by computing the
 quotient of total squared twist rate over squared total twist rate.}
 
 Note that for any global minimizer $(\curve,b)$ of $\energy$,
 the director $b$ is a global minimizer of $\energy[y,\cdot]$ as well.

 In case $L_\bc^\rod[y,b]$ does not affect both points~$0$ and~$L$,
 minimizing $\energy[\curve,\cdot]$ is equivalent to constructing a twist-free frame.
 
 Otherwise we face a \emph{clamped problem}, i.e.,
 $b$ has to satisfy $b(0)=\widehat b_{-}$ and $b(L)=\widehat b_{+}$
 for $\widehat b_{-},\widehat b_{+}\in\mathbb S^{2}$, $\widehat b_{-}\perp\curve'(0)$,
 $\widehat b_{+}\perp\curve'(L)$.
 (If the boundary condition just forces the frame to close up,
 i.e., $b(0)=b(L)$, we may just let $\widehat b_{-}=\widehat b_{+}$
 for an arbitrary vector perpendicular to $\curve'(0)$ and $\curve'(L)$.)
 In this case there is a global minimizer $b_{\min}$
 with constant twist rate~\eqref{eq:dubois}.
 Using a twist-free reference frame $F_{0}=[\curve',b_{0},d_{0}]$
 with $b_{0}=\widehat b_{-}$ we have $\varphi(0)=0$ and $\varphi(L)\in(-\pi,\pi]$.
 If $\varphi(L)\ne\pi$ there is a unique minimizer $b$ with
 twist rate $\varphi' \equiv \frac{\varphi(L)}{L}$.
 If $\varphi(L)=\pi$ there are precisely two minimizers with
 twist rate $\varphi' \equiv \pm\frac\pi L$. 
 
 Topological restrictions can enforce arbitrary angles $\varphi(L)\in\R$,
 however, this does not apply to~\eqref{eq:P} which does not preserve
 this sort of condition throughout the evolution.
 Keeping track of topology enforces modeling impermeability---quite a
 natural feature which we will address in Section~\ref{sect:imper}.

\subsection{Releasing total twist}
\label{sect:mintt}

In light of {Section}~\ref{sect:optframe} we must have
$\abs{\Tw(\curve,b)}\le\frac 12$ for any global minimizer $(\curve,b)$
of~$\energy$.
In general, we have
\[ \abs{\Tw(\curve,b)}^{2}\le \frac{L}{2\pi^{2}}\cdot\frac12 \int_{0}^{L}(b'(s)\cdot d(s))^{2}\d s, \]
however, the absolute value of the total twist does not have to be decreasing
throughout the evolution.

At the final stage of an evolution of a closed curve
(the frame does not have to close up), all we can hope for, however,
is $\abs{\Tw(\curve,b)}\le1$.
We briefly explain how this bound can be realized.

One can change the Gauss linking number of a given (embedded) rod by $\pm2$
by locally forming a small loop, performing a suitable self-penetration and
moving the curve back to the original position. The value of
the writhe functional is not affected as it does not depend on
the frame. So we have changed the total twist by {$\pm2$} as well
according to the C\u alug\u areanu identity (cf.\@ {Section}~\ref{sect:tot-twist}).

A self-penetration of the curve will in general lead to a change
in topology resulting in a discontinuity of the linking number.
While the total twist is continuous throughout the evolution,
the writhe functional is not well-defined on non-embedded curves
and thereby compensates the change of the linking number.

An evolution does not necessarily realize the bound $\abs\Tw\le1$.
First of all, it is in general unclear whether it will in fact
converge to a (local) minimizer at all.
Another obstruction is discussed in the next section.

\subsection{Michell’s instability}
\label{sect:zajac}

Among all closed curves, the round circle fra\-med by its normal vector
is the unique global minimizer of $\energy$
(up to a constant rotation of the frame).

It is a remarkable fact that the round circle remains a minimizer
(at least a local one, cf.\@ {Section}~\ref{sect:mintt}) when we add some twist
by increasing the (constant) twist rate~$\beta$ (which results in a discontinuity of the frame at one point).
This phenomenon which is referred to as \emph{Michell's instability}
has been discovered 130 years ago~\cite{michell} and then been rediscovered several
times, see Goriely~\cite{goriely} for more details.

Zajac~\cite{zajac} has found the threshold $\beta_{*} = 2\pi\sqrt3\kappa/L$
that separates the stable and unstable regime.
As before, $L$ denotes the length of the curve.
More precisely, the circular rod is stable as long as $\abs\beta<\beta_{*}$
and unstable if $\abs\beta>\beta_{*}$.
The dependency on $\kappa=\cb/\ct$ is quite intuitive:
If $\kappa$ is very small, the bending energy dominates
which always prefers the circle.
A proof of Zajac's result adapted to our setting can be found in
Ivey and Singer~\cite[Sect.~6]{ivey-singer}.

Values of $\kappa<\tfrac13\sqrt3\approx0.5774$
lead to an initial twist $\beta_{*}<\frac{2\pi}L$.
Starting an evolution with $\beta_{\ini}\in\br{\beta_{*},\frac{2\pi}L}$,
we have $\abs\Tw=\Tw<1$.
Therefore the rod cannot reduce twist by self-penetration,
so we will merely face some buckling of the rod---which is
{difficult to detect numerically}.
In Experiment~\ref{sect:michell} we chose $\kappa=\tfrac32$,
for which we measure
a drastic change of the twisting energy by self-penetration of the curve.

Interestingly, Michell's instability does not occur for initially
curved curves, see Olsen et al.~\cite{olson} and Hu~\cite{hu}.

\section{Density}\label{sect:approx}

We can smoothly approximate any framed curve in $\class$,
i.e., $\class\cap\br{C^{\infty}\times C^{\infty}}$ is dense in $\class$
with respect to the $H^{2}\times H^{1}$-topology,
{preserving given boundary conditions.}

\begin{lemma}\label{lem:approx}
 For any frame $(\curve,b)\in\class$ and $\eps>0$
 there is another frame $(\curve_{\eps},b_{\eps})\in\class\cap\br{C^{\infty}\times C^{\infty}}$
 with $\norm{\curve_{\eps}-\curve}_{\hzwei}\le\eps$
 and $\norm{b_{\eps}-b}_{\heins}\le\eps$.
\end{lemma}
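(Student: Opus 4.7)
My strategy is to construct $(\curve_\eps, b_\eps)$ in three stages: mollify, project back onto the pointwise constraint manifold, and perform a localised boundary correction.

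\textbf{Mollification and projection.} I extend $(\curve,b)$ across the endpoints of $[0,L]$ by a bounded linear extension operator to obtain $\bar\curve\in H^{2}(\R)$ and $\bar b\in H^{1}(\R)$, and convolve with a standard mollifier $\rho_{\delta}$ to produce smooth $\widehat\curve_{\delta}=\bar\curve *\rho_{\delta}$ and $\widehat b_{\delta}=\bar b*\rho_{\delta}$ converging to $(\curve,b)$ in $H^{2}\times H^{1}$. The one-dimensional embeddings $H^{2}(0,L)\hookrightarrow C^{1}([0,L])$ and $H^{1}(0,L)\hookrightarrow C^{0}([0,L])$ force $\widehat\curve_{\delta}'\to\curve'$ and $\widehat b_{\delta}\to b$ uniformly, so for $\delta$ small
\[
\tilde t_{\delta}:=\frac{\widehat\curve_{\delta}'}{|\widehat\curve_{\delta}'|}, \qquad
\tilde b_{\delta}:=\frac{\widehat b_{\delta}-(\widehat b_{\delta}\cdot\tilde t_{\delta})\tilde t_{\delta}}{|\widehat b_{\delta}-(\widehat b_{\delta}\cdot\tilde t_{\delta})\tilde t_{\delta}|}
\]
define smooth unit vector fields with $\tilde t_{\delta}\perp\tilde b_{\delta}$. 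Normalisation and orthogonal projection are smooth maps on a neighbourhood of the constraint manifold $\{(t,v)\in\mathbb S^{2}\times\mathbb S^{2}:t\cdot v=0\}$, so $\tilde t_{\delta}\to\curve'$ and $\tilde b_{\delta}\to b$ in $H^{1}$, and $\tilde\curve_{\delta}(x):=\curve(0)+\int_{0}^{x}\tilde t_{\delta}$ approximates $\curve$ in $H^{2}$.

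\textbf{Boundary correction.} By linearity of $L_\bc^\rod$ it suffices to match $\curve(0)$, $\curve(L)$, $\curve'(0)$, $\curve'(L)$, $b(0)$, $b(L)$ exactly. The starting point $\tilde\curve_{\delta}(0)=\curve(0)$ already agrees. Choose a smooth rotation field $R_{\delta}(x)\in\mathrm{SO}(3)$ equal to the identity outside boundary layers of width $h_{\delta}$ near $0$ and $L$ which sends $(\tilde t_{\delta},\tilde b_{\delta})$ at each endpoint to the prescribed pair $(\curve',b)$; this is well defined since both pairs are orthonormal. The required rotation angles are $O(\|\tilde t_{\delta}-\curve'\|_{L^{\infty}}+\|\tilde b_{\delta}-b\|_{L^{\infty}})=o(1)$, and taking $h_{\delta}$ of the order of the square root of these errors gives $R_{\delta}\tilde t_{\delta}\to\curve'$ and $R_{\delta}\tilde b_{\delta}\to b$ in $H^{1}$. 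The only condition not yet imposed is the integral $\int_{0}^{L}R_{\delta}\tilde t_{\delta}=\curve(L)-\curve(0)$, with residual $v_{\delta}:=\curve(L)-\curve(0)-\int_{0}^{L}R_{\delta}\tilde t_{\delta}=o(1)$. Because $|\curve(L)-\curve(0)|<L$ strictly, $\curve'$ is nonconstant and its image contains at least two distinct points of $\mathbb S^{2}$, so three compactly supported smooth vector fields $w_{1},w_{2},w_{3}$ tangent to $\mathbb S^{2}$ along $\curve'$ with linearly independent integrals exist. An implicit function theorem applied to the map $(\lambda_{1},\lambda_{2},\lambda_{3})\mapsto\int_{0}^{L}\exp_{R_{\delta}\tilde t_{\delta}}(\sum_{j}\lambda_{j}w_{j})$ yields a tangent-to-$\mathbb S^{2}$ perturbation of size $O(|v_{\delta}|)=o(1)$ in $H^{1}$ whose integral equals $v_{\delta}$. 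Integrating produces $\curve_{\eps}$; re-projecting $\tilde b_{\delta}$ onto $\{\curve_{\eps}'\}^{\perp}$ and composing with $R_{\delta}$ yields $b_{\eps}$.

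\textbf{Main obstacle.} The delicate issue is to enforce three coupled demands simultaneously: the nonlinear pointwise constraints $|\curve_{\eps}'|=|b_{\eps}|=1$, $\curve_{\eps}'\cdot b_{\eps}=0$; the six exact boundary values at $0$ and $L$; and $H^{2}\times H^{1}$-proximity under $\eps$. The endpoint-matching rotation $R_{\delta}$ generally perturbs the integral of the tangent and so endangers the endpoint condition on $\curve_{\eps}(L)$; repairing it calls for an interior perturbation that must remain tangent to $\mathbb S^{2}$ and cannot be localised in a vanishing region. The strict length inequality $|\curve(L)-\curve(0)|<L$ is precisely the geometric reservoir that supplies the freedom for this correction, and the quantitative choice of $h_{\delta}$ against the $L^{\infty}$-error of $\tilde t_{\delta}$ is what balances the loss induced by the rotation and keeps the total $H^{2}\times H^{1}$ error below $\eps$.
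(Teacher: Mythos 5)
Your construction is correct in substance but follows a genuinely different route from the paper's. The paper works additively on the curve itself: mollify, subtract cutoff-weighted boundary corrections, then restore the \emph{length} by adding a dedicated field $\omega_{\delta}\phi\vec v$ supported on intervals where $\pm\curve'\cdot\vec v\ge2\lambda$ (this is where the slack $\abs{\curve(L)-\curve(0)}<L$ enters there), reparametrize to arclength via the continuity of the reparametrization operator (Lemma~\ref{lem:repara}), and finally Gram--Schmidt the director. You instead pass to the tangent indicatrix: normalizing $\widehat\curve_{\delta}'$ and integrating makes arclength parametrization automatic, so you dispense with Lemma~\ref{lem:repara} entirely, but the price is that the endpoint $\curve(L)$ must be recovered by an interior, sphere-tangent perturbation whose existence you get from an implicit function theorem; the slack $\abs{\curve(L)-\curve(0)}<L$ enters for you through the nonconstancy of $\curve'$, which supplies perturbation directions with spanning integrals. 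Both uses of the slack are legitimate and roughly equally quantitative. Two small points to tighten: (i) ``the image of $\curve'$ contains at least two distinct points'' is not quite enough for the tangent planes to span $\R^{3}$ (antipodal points share a tangent plane); you need two non-antipodal values, which follows from continuity of $\curve'$ on a connected interval. (ii) You should dispose of the degenerate case $\abs{\curve(L)-\curve(0)}=L$ (the straight segment), where $\curve'$ is constant and your spanning argument fails; as in the paper, $\curve$ is then already smooth and only $b$ needs treatment.
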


\begin{proof}
 Our strategy is as follows. We first construct
 smooth approximizers $(\curve_{\delta},b_{\delta})\in C^{\infty}\times C^{\infty}$.
 In a second step we correct the boundary values by adding (smooth)
 functions $v_{\delta}$ and $c_{\delta}$.
 The new curve $\curve_{\delta}+v_{\delta}$ will not have
 length $L$.
 We balance the length by adding another
 smooth function $w_{\delta}$ compactly supported in $(0,L)\setminus\supp v_{\delta}$.
 Now we reparametrize
 the curve $\curve_{\delta}+v_{\delta}+w_{\delta}$ to arclength
 and apply the same reparametrization to the vector field $b_{\delta}+c_{\delta}$.
 Renormalizing it by the usual Gram--Schmidt scheme produces
 the required director.

 To begin with, we perpare the length correction.
 If $\abs{\curve(L)-\curve(0)}=L$, the curve $u$ just parametrizes
 the segment from $\curve(0)$ to $\curve(L)$, so
 $\curve\in C^{\infty}$ and we only have
 to treat the director $b$ (which can be done similarly as outlined
 below).
 If $\abs{\curve(L)-\curve(0)}<L$ we infer from
 $H^{2}\subset C^{1}$ that $\curve\in\class$ cannot only
 move on a straight line.
 So we may assume that there is some constant unit vector $\vec v\in\mathbb S^{2}$, $\vec v\perp\br{\curve(L)-\curve(0)}$
 (this condition is empty for closed curves), such that
 $\curve'\cdot\vec v>0$ on some interval $I_{+}\subset [0,L]$.
 Due to the fact that $\int_{0}^{L}\curve'(x)\cdot\vec v\d x
 =(\curve(L)-\curve(0))\cdot\vec v=0$
 there has to be another interval $I_{-}\subset [0,L]$ on which
 $\curve'\cdot\vec v<0$.
 Diminishing $I_{\pm}$ if necessary, we may assume that
 they are both contained in $(\mu,L-\mu)$ for some $\mu\in(0,L/2)$.
 Moreover we can assume that there is some $\lambda\in(0,\tfrac12]$ such that
 $\pm\curve'\cdot\vec v\ge2\lambda$ on $I_{\pm}$.
 
 We choose $\delta\in(0,\delta_{0}]$ for some $\delta_{0}\in(0,1]$
 which will
 be fixed later on
 only depending on $(\curve,b)$ and $\eps$.
 Using a standard mollifier, we
 obtain $(\curve_{\delta},b_{\delta})\in C^{\infty}\rzd
 \times C^{\infty}\rzd$
 with $\norm{\curve_{\delta}-\curve}_{\hzwei}\le\delta$ and
 $\norm{b_{\delta}-b}_{\heins}\le\delta$.
 {We may assume that
 $\pm\curve_{\delta}'\cdot\vec v\ge\lambda$ on $I_{\pm}$}
 for all $\delta\in(0,\delta_{0}]$.
 
 In order to match the boundary conditions, we subtract
 suitable functions. More precisely, we let
 \begin{align*}
 \bar\curve_{\delta} = \curve_{\delta} + v_{\delta} &= \curve_{\delta}
 - (\curve_{\delta}(0)-\curve(0))\zeta_{0}
 - (\curve_{\delta}'(0)-\curve'(0))\zeta_{1} \\
 &\qquad\quad{}- (\curve_{\delta}(L)-\curve(L))\zeta_{0}(L-\cdot)
 + (\curve_{\delta}'(L)-\curve'(L))\zeta_{1}(L-\cdot), \\
 \bar b_{\delta} = b_{\delta} + c_{\delta} &= b_{\delta}-(b_{\delta}(0)-b(0))\zeta_{0}-(b_{\delta}(L)-b(L))\zeta_{0}(L-\cdot)
 \end{align*}
 where
 $\zeta_{0},\zeta_{1}\in C^{\infty}([0,L])$ fulfill
 $\zeta_{j}(0) = \delta_{j,0}$,
 $\zeta_{j}'(0) = \delta_{j,1}$, $\left.\zeta_{j}\right|_{[\mu,L]}\equiv0$, $j=0,1$.
 By construction we have
 $L_{\mathrm{bc}}^{\rod}[\bar\curve_{\delta},\bar b_{\delta}]
 =L_{\mathrm{bc}}^{\rod}[\curve,b]=\elbc^{\rod}$ as well as
 \begin{align*}
 \norm{\bar\curve_{\delta} - \curve_{\delta}}_{\hzwei}
 &\le C_{\mu}\norm{\curve_{\delta}-\curve}_{C^{1}} \le C_{\mu}\widetilde C\delta, \\
 \norm{\bar b_{\delta} - b_{\delta}}_{\heins}
 &\le C_{\mu}\norm{b_{\delta}-b}_{C^{0}} \le C_{\mu}\widetilde C\delta
 \end{align*}
 where $C_{\mu}$ only depends on~$\mu$ and $\widetilde C>0$ on the embedding $H^{1}\hookrightarrow C^{0}$.

 The length correction function will be defined by
 $w_{\delta}=\omega_{\delta}\phi\vec v$
 for some $\omega_{\delta}\in\R$ to be defined later
 and $\phi\in C^{\infty}([0,L])$ is compactly supported in $(0,L)$
 with $\pm\phi'\ge0$ on $I_{\pm}$ and
 $\phi'\equiv0$ elsewhere, but $\phi\not\equiv0$.

 The idea is that by choosing $\omega_{\delta}$ accordingly,
 we can correct the length of ($\curve_{\delta}$ and) $\bar\curve_{\delta}$ by
 an amount between $[-\alpha,\alpha]$ where $\alpha>0$ does
 not depend on $\delta$ (nor $\delta_{0}$). As $\length[\bar\curve_{\delta}]\to L$
 for $\delta\searrow0$ we can perform the length correction
 if $\delta_{0}$ is small enough. Furthermore, $\omega_{\delta}\to0$
 as $\delta\searrow0$.

 To make this more precise, let $w=\omega\phi\vec v$ for some $\omega\in\R$ with
 \[ \abs{\omega}\le\frac\lambda{\norm{\phi'}_{C^{0}}}. \]
 As $y_{\delta}'\cdot\vec v\phi'$ is
 bounded below by $\lambda\abs{\phi'}$ on $[0,L]$,
 we obtain
 \begin{align*}
  \frac{\abs{y_{\delta}'+w'}^{2} - \abs{y_{\delta}'}^{2}}\omega
  &={2y_{\delta}'\cdot\vec v\phi'+\omega{\phi'}^{2}}
  \ge2\lambda\abs{\phi'}-\abs\omega{\phi'}^{2}
  \ge\lambda\abs{\phi'}, \\
  \abs{y_{\delta}'+w'} + \abs{y_{\delta}'}
  &\le2\abs{y_{\delta}'}+\abs{w'}
  \le 2\br{1+\abs{y_{\delta}'-u'}} + \abs\omega\abs{\phi'} \\
  &\le 2\br{1+\widetilde C\delta_{0}} + \lambda
  \le 2\br{2+\widetilde C}, \\
  \frac{\abs{y_{\delta}'+w'} - \abs{y_{\delta}'}}\omega
  &\ge\frac{\lambda\abs{\phi'}}{2\br{2+\widetilde C}}.
 \end{align*}
 Therefore,
 \[
 \abs{y_{\delta}'+w'}\quad
 \left\{
 \begin{array}{ll}
 \le\abs{y_{\delta}'} - \abs\omega\frac{\lambda\abs{\phi'}}{2\br{2+\widetilde C}}
 & \text{if }\omega\le0, \\
 \ge\abs{y_{\delta}'} + \abs\omega\frac{\lambda\abs{\phi'}}{2\br{2+\widetilde C}}
 & \text{if }\omega\ge0.
 \end{array}
 \right.
 \]
 Recalling that $v_{\delta}$ and $w$ have disjoint support, we infer
 \[
 \abs{y_{\delta}'+v_{\delta}'+w'}\quad
 \left\{
 \begin{array}{ll}
 \le\abs{y_{\delta}'+v_{\delta}'} - \abs\omega\frac{\lambda\abs{\phi'}}{2\br{2+\widetilde C}}
 & \text{if }\omega\le0, \\
 \ge\abs{y_{\delta}'+v_{\delta}'} + \abs\omega\frac{\lambda\abs{\phi'}}{2\br{2+\widetilde C}}
 & \text{if }\omega\ge0,
 \end{array}
 \right.
 \]
 which allows for the desired length correction depending on the
 sign of $\omega$.
 More precisely, we can change the length of $y_{\delta}+v_{\delta}$
 by at least $\pm\alpha$ where
 $\alpha = \frac{\lambda^{2}\norm{\phi'}_{L^{1}}}{2\br{2+\widetilde C}\norm{\phi'}_{C^{0}}}$.
 Diminishing $\delta_{0}$ if necessary,
 we can ensure that $\abs{\length(y_{\delta}+v_{\delta})-L}\le\alpha$.
 So we can find $\omega=\omega_{\delta}$ such that
 the curve $\bar{\bar\curve}_{\delta}=\curve_{\delta}+v_{\delta}+w_{\delta}$
 has length~$L$ with
 $L_{\mathrm{bc}}^{\rod}[\bar{\bar\curve}_{\delta},\bar b_{\delta}]=\elbc^{\rod}$.

 The embedding $H^{1}\hookrightarrow C^{0}$ guarantees that
 the curve $\bar{\bar\curve}_{\delta}$ is immersed
 and $\min\abs{\bar b_{\delta}}\ge\tfrac12$
 if $\delta_{0}$ is small enough.
 So we may apply the reparametrization operator from Lemma~\ref{lem:repara}
 and let $\widecheck y_{\delta}={\bar{\bar\curve}}_{\delta}\circ\psi_{\bar{\bar\curve}_{\delta}}^{-1}$
 and $\widecheck b_{\delta}=\bar b_{\delta}\circ\psi_{\bar{\bar\curve}_{\delta}}^{-1}$.
 We still have $\min\abs{\widecheck b_{\delta}}\ge\tfrac12$.
 Now
 \[ \norm{{\bar{\bar\curve}}_{\delta}-\curve}_{\hzwei}
 \le\norm{\curve_{\delta}-\curve}_{\hzwei}
 +\norm{v_{\delta}}_{\hzwei} + \norm{w_{\delta}}_{\hzwei}
 \le\delta + C_{\mu}\widetilde C\delta + \omega_{\delta}\norm\phi_{H^{2}} \]
 and $\norm{\bar b_{\delta}-b}_{\heins}$
 tend to zero as $\delta\searrow0$.
 Using the continuity of the repara\-metrization and $\abs{\curve'}\equiv1$
 we find that 
 $\norm{{{\widecheck\curve}}_{\delta}-\curve}_{\hzwei}$
 tends to zero as well.
 Choosing $\delta_{0}$ sufficiently small, we may assume that
 $\norm{{{\widecheck\curve}}_{\delta}-\curve}_{\hzwei}\le\eps$ and
 $\norm{\bar b_{\delta}-b}_{\heins}\le\frac\eps4$,
 and additionally
 $\norm{\widecheck b_{\delta}-{\bar b}_{\delta}}_{\heins}\le\frac\eps4$
 since $\psi_{\bar{\bar\curve}_{\delta}}\to\id_{[0,L]}$ with respect to $H^{2}$-convergence.
 Note that $(\widecheck\curve_{\delta},\widecheck b_{\delta})$
 are still $C^{\infty}$-smooth with
 $L_{\mathrm{bc}}^{\rod}[\widecheck\curve_{\delta},\widecheck b_{\delta}]
 =\elbc^{\rod}$.
 
 It remains to correct the director. To this end, we let
 $\widetilde b_{\delta}=\widecheck b_{\delta}-\br{\widecheck b_{\delta}\cdot\widecheck\curve_{\delta}'}\widecheck\curve_{\delta}'$.
 We have
 $\norm{\widetilde b_{\delta}-\widecheck b_{\delta}}_{\heins}\le\tfrac\eps4$
 and
 $\norm{\left.\widetilde b_{\delta}\middle/\abs{\widetilde b_{\delta}}\right.-\widetilde b_{\delta}}_{\heins}\le\frac\eps4$
 if $\delta_{0}$ is sufficiently small. Indeed,
 using \korr{Leibniz rule $\norm{vw}_{\heins}\le\norm v_{\heins}\norm w_{\heins}$ and}
 the fact that both
 $\norm{{{\widecheck\curve}}_{\delta}-\curve}_{\hzwei}$ and
 $\norm{\widecheck b_{\delta}-b}_{\heins}$ get arbitrarily small
 provided $\delta_{0}$ is chosen accordingly, the same applies to
 \begin{align*}
 &\norm{\widetilde b_{\delta}-\widecheck b_{\delta}}_{\heins}
 =\norm{\br{\widecheck b_{\delta}\cdot\widecheck\curve_{\delta}'}\widecheck\curve_{\delta}'}_{\heins}
 =\norm{\br{\widecheck b_{\delta}\cdot\widecheck\curve_{\delta}'-b\cdot\curve'}\widecheck\curve_{\delta}'}_{\heins} \\
 &\le\br{\norm{\widecheck b_{\delta}-b}_{\heins}\norm{\widecheck\curve_{\delta}'}_{\heins}
 +\norm b_{H^{1}}\norm{\widecheck\curve_{\delta}'-\curve'}_{\heins}}\norm{\widecheck\curve_{\delta}'}_{\heins} \\
 &\le\br{\norm{\widecheck b_{\delta}-b}_{\heins}\br{\norm{\curve'}_{\heins}+\eps}
 +\norm b_{H^{1}}\norm{\widecheck\curve_{\delta}'-\curve'}_{\heins}}
 \br{\norm{\curve'}_{\heins}+\eps},
 \end{align*}
 \korr{and
 \begin{align*}
 &\norm{\frac{\widetilde b_{\delta}}{\abs{\widetilde b_{\delta}}}-\widetilde b_{\delta}}_{\heins}
 =\norm{\widetilde b_{\delta}\cdot\frac{1-\abs{\widetilde b_{\delta}}}{\abs{\widetilde b_{\delta}}}}_{\heins}
 \le\norm{\widetilde b_{\delta}}_{\heins}\norm{\frac{\abs b-\abs{\widetilde b_{\delta}}}{\abs{\widetilde b_{\delta}}}}_{\heins} \\
 &\le\br{\norm b_{H^{1}}+\eps}\norm{\frac{\sp{\widetilde b_{\delta}+b,\widetilde b_{\delta}-b}}{\abs{\widetilde b_{\delta}}+\abs b}}_{\heins}
 \norm{\frac1{\abs{\widetilde b_{\delta}}}}_{\heins} \\
 &\le\br{2\norm b_{H^{1}}+\eps}^{2}\norm{\frac{1}{\abs{\widetilde b_{\delta}}+\abs b}}_{\heins}
 \norm{\frac1{\abs{\widetilde b_{\delta}}}}_{\heins}\norm{\widetilde b_{\delta}-b}_{\heins}.
 \end{align*}}%
 Arguing as above, we find that the term $\norm{\left.1\middle/{\abs{\widetilde b_{\delta}}}\right.}_{\heins}$
 is uniformly bounded. In fact, from
 $\abs{\widecheck b_{\delta}\cdot\widecheck\curve_{\delta}'}
 = \abs{\widecheck b_{\delta}\cdot\widecheck\curve_{\delta}' - b\cdot\curve'}
 \xrightarrow{\delta\searrow0}0$
 we infer $\abs{\widetilde b_{\delta}}\ge\tfrac14$
 if $\delta_{0}$ is sufficiently small.
 This allows for bounding $\norm{\br{{\left.1\middle/\abs{\widetilde b_{\delta}}\right.}}'}$
 as well.
 \korr{In the same way we prove boundedness of $\norm{\left.1\middle/\br{\abs{\widetilde b_{\delta}}+\abs b}\right.}_{\heins}$.}
 Letting $(\curve_{\eps},b_{\eps})=\br{\widecheck\curve_{\delta_{0}},\left.\widetilde b_{\delta_{0}}\middle/\abs{\widetilde b_{\delta_{0}}}\right.}
 \in\class\cap\br{C^{\infty}\times C^{\infty}}$ finishes the proof.
\end{proof}

Let $\Hr^{2}\rzd$ denote the (open) subset of regular (i.e., {non-vanishing derivative})
curves in $H^{2}\rzd$.

\begin{lemma}\label{lem:repara}
 The operator $\Hr^{2}\rzd\to\Hr^{2}\rzd$ defined by
  \[ \curve \mapsto \curve\circ\psi_{\curve}^{-1}
 \qquad\text{where}\qquad
 \psi_{\curve}(x)=\frac{L}{\length[\curve]}\length\sq{\left.\curve\right|_{[0,x]}} \]
 that reparametrizes an immersed curve to constant speed
 is continuous with respect to the $H^{2}$-norm.
 Moreover, $\curve\circ\psi_{\curve}^{-1}\in C^{\infty}$ if $u\in C^{\infty}$
 is immersed.
\end{lemma}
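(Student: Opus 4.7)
The plan is to factor the operator as the composition of three maps and to prove continuity of each in the appropriate topology: first the reparametrization map $\curve\mapsto\psi_\curve$, then the inversion $\psi\mapsto\psi^{-1}$ on the set of $H^2$-diffeomorphisms of $[0,L]$ with derivative uniformly bounded away from zero, and finally the substitution $(\curve,\phi)\mapsto\curve\circ\phi$. Differentiating the definition yields $\psi_\curve'=\tfrac{L}{\length[\curve]}\abs{\curve'}$. Since $\curve'\in H^1\hookrightarrow C^0$ is bounded below by some $c>0$ on a neighborhood of any fixed $\curve\in\Hr^2\rzd$, the scalar map $v\mapsto\abs v$ is smooth on the relevant range, and the $H^1$-algebra property (valid in one space dimension) ensures $\abs{\curve'}\in H^1$ depending continuously on $\curve'$. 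Hence $\psi_\curve\in H^2$, and $\curve\mapsto\psi_\curve$ is continuous with $\psi_\curve'$ uniformly bounded below on a fixed $H^2$-neighborhood.

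For the inversion step, uniform convergence $\psi_n\to\psi$ entails uniform convergence $\psi_n^{-1}\to\psi^{-1}$ via the direct estimate $\abs{\psi_n^{-1}(s)-\psi^{-1}(s)}\le c^{-1}\norm{\psi_n-\psi}_{C^0}$, where $c>0$ is the common lower bound on the derivatives. The identities $(\psi^{-1})'=1/(\psi'\circ\psi^{-1})$ and $(\psi^{-1})''=-\psi''(\psi^{-1})/(\psi'(\psi^{-1}))^3$ then upgrade this to continuity in $H^2$, again using the algebra property of $H^1$ together with continuity of composition in $H^1$ under uniformly convergent $C^1$-diffeomorphisms.

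Continuity of the composition $(\curve,\phi)\mapsto\curve\circ\phi$ in $H^2$ is the main obstacle, since composition is generally not continuous on Sobolev spaces. Here it succeeds thanks to the uniform upper and lower bounds on $\phi'$ inherited from Step 1 and $H^1\hookrightarrow C^0$. I split $\curve_n\circ\phi_n-\curve\circ\phi=(\curve_n-\curve)\circ\phi_n+(\curve\circ\phi_n-\curve\circ\phi)$ and control the highest-order contribution via the chain rule $(\curve\circ\phi)''=\curve''(\phi)(\phi')^2+\curve'(\phi)\phi''$. A change of variables with uniformly bounded $\phi_n'$ dominates the first summand by $\norm{\curve_n-\curve}_{\hzwei}$ up to a constant depending only on $c$. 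For the second summand, $\phi_n\to\phi$ in $C^1$ together with density of continuous functions in $L^2$ yields $L^2$-continuity of $v\mapsto v\circ\phi_n$, applied to $v=\curve''$ and $v=\curve'$ separately.

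For the smoothness assertion, if $\curve\in C^\infty$ is immersed, then $\abs{\curve'}\in C^\infty$ and hence $\psi_\curve\in C^\infty$ with nowhere-vanishing derivative. The classical inverse function theorem then gives $\psi_\curve^{-1}\in C^\infty$, and consequently $\curve\circ\psi_\curve^{-1}\in C^\infty$.
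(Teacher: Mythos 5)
The paper does not actually prove this lemma itself --- it defers to the appendix of the cited reference \cite{reiter} and only remarks that the smoothness assertion follows from the explicit formula. Your self-contained argument is correct and follows exactly the route one would expect that reference to take: factor the operator through $\curve\mapsto\psi_\curve$, the inversion of $C^1$-diffeomorphisms with derivative bounded below, and the substitution map, with the only genuinely delicate point being the $L^2$-continuity of $v\mapsto v\circ\phi_n$, which you correctly handle by density of continuous functions plus the change-of-variables bound furnished by the uniform two-sided bounds on $\phi_n'$. The single cosmetic imprecision is in the term $\curve'(\phi_n)\phi_n''$: since $\phi_n''$ converges only in $L^2$, you need \emph{uniform} (not merely $L^2$) convergence of $\curve'\circ\phi_n\to\curve'\circ\phi$, which is immediate from $\curve'\in H^1\hookrightarrow C^0$ and the uniform convergence $\phi_n\to\phi$, so nothing is actually missing.
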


A proof can be found in~\cite[Appendix]{reiter};
the argument applies without rescaling and the additional requirement of embeddedness.
It applies to non-periodic intervals $[0,L]$ as well.
The last statement can be derived from the formula.

\section{Discretization}\label{sect:discrete}

Our discretization is based on cubic and linear finite element spaces.
We consider a partition of $[0,L]$ by a set of
nodes $\cN_{h}$ that contains the endpoints $0$ and $L$.
We define nodal bases $\br{\varphi_{z}}_{z\in\cN_{h}}$ and
$\br{\psi_{z,j}}_{z\in\cN_{h}},j=0,1$ with the following properties.
If $z_{\pm}\in\cN_{h}$ are \korr neighboring nodes of $z\in\cN_{h}$
then $\varphi_{z}$ and $\psi_{z,j}$ are supported in $[z_{-},z_{+}]$.
On {the intervals $[z_-,z]$ and $[z,z_+]$}
the functions $\varphi_{z}$ are (affine) linear
with $\varphi_{z}(z)=1$ while $\psi_{z,j}$ are cubic polynomials
satisfying $\psi_{z,j}(z)=\delta_{j,0}$ and $\psi_{z,j}'(z)=\delta_{j,1}$,
$j=0,1$.
We define nodal interpolation operators on $C^{0}([0,L])$
and $C^{1}([0,L])$ respectively by letting
\begin{align*}
 \cI_{h}^{1,0}v &= \sum_{z\in\cN_{h}}v(z)\varphi_{z}, \\
 \cI_{h}^{3,1}w &= \sum_{z\in\cN_{h}}\br{w(z)\psi_{z,0}+w'(z)\psi_{z,1}}.
\end{align*}
Furthermore, we will employ the averaging operator
$\Qh$ which is piecewise defined on any element $[z,z']$
(i.e., $z,z'\in\cN_{h}$ are neighboring) by
\[ \Qh v(x) = \frac1{z'-z}\int_{z}^{z'}v(\xi)\d\xi,
\qquad x\in(z,z'). \]
We have $\norm{v-Q_{h}v}_{L^{\infty}}\le Ch^{\alpha}\sq v_{C^{0,\alpha}}$
for all $v\in C^{0,\alpha}$, $\alpha\in(0,1]$
and $\norm{Q_{h}v_{h}}_{L^{\infty}}\le\norm{v_{h}}_{L^{\infty}}$
for all piecewise
linear functions $v_{h}$ subject to $\cN_{h}$.

Both for the discrete approximation result and the stability of our
numerical scheme presented in Section~\ref{sect:scheme} below it is crucial
to exploit the structure of the {dimensionally reduced} functionals.

To identify convex and concave terms, we observe that
the orthonormality of the frame $\frame=[t,b,d]$ implies
$b'\cdot b=0$ and $b'\cdot t=-b\cdot t'$.
Therefore the integrand of the twisting functional becomes
$(b'\cdot d)^{2} = \abs{b'}^{2} - (b\cdot t')^{2}$.

To obtain a coercivity property (under the restriction $\|b_h\|_{L^\infty}\le 1$)
we set
\begin{equation}\label{eq:theta}
 \theta = \min\big\{\frac{\k}{2},1\big\}\quad\in(0,1]
\end{equation}
which ensures $\k\ge 2 \theta$.
This will allow for controlling (part of) the second term of $I_\rod[y,b]$
by the first one even if $\kappa<2$.
Now we decompose
\begin{align*}
I_\rod[y,b]
&= \frac{\k}{2} \int_0^L |y''|^2 \dv{x}  + \frac{1}{2} \int_0^L (b'\cdot d)^2 \dv{x} \\
&= \frac{\k}{2} \int_0^L |y''|^2 \dv{x} + \frac{\theta}{2} \int_0^L |b'|^2 \dv{x} - \frac{\theta}{2} \int_0^L (b\cdot\curve'')^2 \dv{x} \\
&\quad{}+\frac{1-\theta}{2} \int_0^L (b'\cdot(\curve'\times b))^2 \dv{x}.
\end{align*}

By $\Vh\subset H^{2}\times H^{1}$ we denote the cross product of piecewise cubic and piecewise
linear functions subject to $\cN_{h}$.
With the product finite element space $V_\rod^h$ and the operator $Q_h$ we consider
the following discretization of the minimization problem~\eqref{eq:P} in which
the pointwise orthogonality relation $y'\cdot b =0$ is approximated via a penalty term:
\begin{equation}\label{eq:Ph}
\tag{${\rm P}_\rod^{h,\veps}$} 
\left\{ 
\begin{array}{l}
\text{Minimize}\quad \displaystyle{I_\rod^{h,\veps}[y_h,b_h] =  
\frac{\k}{2} \int_0^L |y_h''|^2 \dv{x} 
+ \frac{\theta}{2} \int_0^L |b_h'|^2 \dv{x}} \\[2mm]
\qquad\qquad \displaystyle{- \frac{\theta}{2} \int_0^L (Q_h b_h \cdot y_h'')^2 \dv{x} 
+ \frac{1}{2\veps} \int_0^L \cI_h^{1,0}[(y_h'\cdot b_h)^2]\dv{x}} \\[2mm]
\qquad\qquad \displaystyle{+ \frac{1-\theta}{2} \int_0^L 
 (b_h'\cdot (y_h'\times Q_h b_h))^2 \dv{x}} \\[2.5mm]
\text{in the set} \ \cA_h = \{(y_h,b_h)\in V_\rod^h: 
L_\bc^\rod[y_h,b_h] = \elbc^\rod, \\[2.5mm]
\qquad\qquad\qquad\qquad\qquad\qquad |y_h'(z)| = |b_h(z)| = 1 \text{ f.a. }z\in \cN_h \big\}.
\end{array}\right.
\end{equation}
{The operators $\Qh$ and $\cI_h^{1,0}$ are {included} in a way
that leads to a \korr simple assembly of the corresponding matrices
and avoids quadrature.}

Note that the constraints are imposed on particular degrees of freedom
which makes the method practical.
In order to ensure $\class_{h}\ne\emptyset$,
the distance between the endpoints of $\curveh$
must be less than $L$ and the spatial mesh
has to be chosen sufficiently fine.

For establishing the Gamma-convergence result, it is useful to write
\begin{align*}
 I_\rod[y,b] &= \frac{\k-\theta}{2} \norm{\curve''}_{\lzwei}^{2} + \frac{\theta}{2} \norm{b'}_{\lzwei}^{2}
 + \frac{\theta}{2} \norm{P_{b}\curve''}_{\lzwei}^{2} + \frac{1-\theta}{2} \norm{b'\cdot(\curve'\times b)}_{\lzwei}^2, \\
 I_\rod^{h,\eps}[y_{h},b_{h}] &= \frac{\k-\theta}{2} \norm{\curveh''}_{\lzwei}^{2} + \frac{\theta}{2} \norm{\bh'}_{\lzwei}^{2}
 + \frac{\theta}{2} \norm{P_{\Qh\bh}\curveh''}_{\lzwei}^{2} \\
 &\quad{} + \frac{1-\theta}{2} \norm{\bh'\cdot(\curveh'\times\Qh\bh)}_{\lzwei}^2
 + \frac{1}{2\veps} \int_0^L \cI_h^{1,0}[(y_h'\cdot b_h)^2]\dv{x}
\end{align*}
where $\proj[b]$ and $\proj[\Qh b_{h}]$ denote the square roots of the positive semidefinite matrices
$\one-b\otimes b$ and $\one-\Qh\bh\otimes\Qh\bh$ respectively.

The only difference to $\energy$ is the penalization term and
the fact that $b$ is replaced by $\Qh\bh$ in the third term
and once in the fourth term.

Our first task is to show that minimizers of $\Eh$ within $\class[h]$
approximate $\energy$-minimizers within $\class$.
In the following statement, we assume that
$\energy$ and $\Eh$ attain the value $+\infty$ outside of $\class$ and $\class[h]$, respectively.

\begin{proposition}\label{prop:gamma}
 As $\eps,h\searrow0$, the functional $I_\rod^{h,\eps}$ Gamma-converges to $I_\rod$
 with respect to the weak $H^{2}\times H^{1}$-topology.
\end{proposition}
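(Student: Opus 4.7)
The plan is to establish the liminf and limsup conditions for Gamma-convergence separately, exploiting the decomposition of $I_\rod$ and $I_\rod^{h,\veps}$ as sums of squared $L^{2}$-norms (as laid out in the lines preceding the proposition). In this reformulation, the highest-order quantities $y''$ and $b'$ appear linearly in each term, weighted by the lower-order quantities $b$ and $y'$ (respectively $Q_{h}b_{h}$ and $y_{h}'$), which will converge strongly in $L^{\infty}$ via the compact embeddings $H^{2}\hookrightarrow C^{1}$ and $H^{1}\hookrightarrow C^{0}$.

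For the liminf, I would take $(y_{h},b_{h})\in\class[h]$ with $(y_{h},b_{h})\rightharpoonup(y,b)$ in $H^{2}\times H^{1}$ and $\liminf I_\rod^{h,\veps}[y_{h},b_{h}]<\infty$. The first subtask is to show $(y,b)\in\class$: the linear functional $L_{\bc}^{\rod}$ is weakly continuous, so the boundary data pass to the limit; the compact embeddings yield uniform convergence $y_{h}'\to y'$, $b_{h}\to b$; combined with the nodal identities $\abs{y_{h}'(z)}=\abs{b_{h}(z)}=1$ and the elementary bound $\abs{b_{h}(z)-b_{h}(z')}\le h^{1/2}\norm{b_{h}'}_{L^{2}}$ (and its analogue for $y_{h}'$ using the $H^{2}$-bound), one obtains $\abs{y_{h}'},\abs{b_{h}}\to 1$ uniformly, hence $\abs{y'}=\abs{b}=1$ a.e. The orthogonality $y'\cdot b=0$ follows from $\int_{0}^{L}\cI_{h}^{1,0}[(y_{h}'\cdot b_{h})^{2}]\dv x\le C\veps$ together with the fact that the nodal interpolant of a uniformly convergent continuous family converges to the limit in $L^{1}$, so the integral also tends to $\int(y'\cdot b)^{2}\dv x$. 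For the energy bound, each of the four non-penalty terms is treated as a squared $L^{2}$-norm: using $\norm{Q_{h}b_{h}-b}_{L^{\infty}}\to 0$ (from the uniform $H^{1}$-bound on $b_{h}$ and the standard estimate for $Q_{h}$), the strong-times-weak calculus yields $Q_{h}b_{h}\cdot y_{h}''\rightharpoonup b\cdot y''$, $P_{Q_{h}b_{h}}y_{h}''\rightharpoonup P_{b}y''$, and $b_{h}'\cdot(y_{h}'\times Q_{h}b_{h})\rightharpoonup b'\cdot(y'\times b)$ in $L^{2}$, and weak $L^{2}$-lower semi-continuity of the norm produces the term-wise liminf bounds; the penalty term is nonnegative and contributes no less than zero.

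For the recovery sequence, given $(y,b)\in\class$ and $\eta>0$, I would apply Lemma~\ref{lem:approx} to pick a smooth $(y_{\eta},b_{\eta})\in\class\cap(C^{\infty}\times C^{\infty})$ with $\norm{y_{\eta}-y}_{\hzwei}+\norm{b_{\eta}-b}_{\heins}\le\eta$, and set $y_{\eta,h}=\cI_{h}^{3,1}y_{\eta}$, $b_{\eta,h}=\cI_{h}^{1,0}b_{\eta}$. Because $\cI_{h}^{3,1}$ preserves nodal values of a function and its first derivative, and $\cI_{h}^{1,0}$ preserves nodal values, the discrete admissibility $(y_{\eta,h},b_{\eta,h})\in\class[h]$ is inherited from that of $(y_{\eta},b_{\eta})$; crucially, the penalty term vanishes \emph{identically} because $y_{\eta,h}'(z)\cdot b_{\eta,h}(z)=y_{\eta}'(z)\cdot b_{\eta}(z)=0$ at every $z\in\cN_{h}$. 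Standard interpolation estimates give strong convergence $y_{\eta,h}\to y_{\eta}$ in $H^{2}$ and $b_{\eta,h}\to b_{\eta}$ in $H^{1}$, and the four non-penalty terms of $I_\rod^{h,\veps}$ are continuous under strong $H^{2}\times H^{1}$ convergence (using again $\norm{Q_{h}b_{\eta,h}-b_{\eta}}_{L^{\infty}}\to 0$). Thus $I_\rod^{h,\veps}[y_{\eta,h},b_{\eta,h}]\to I_\rod[y_{\eta},b_{\eta}]$ as $h\to 0$ for fixed $\eta$, and a diagonal selection $\eta=\eta(h)\to 0$ slowly enough, together with continuity of $I_\rod$ in strong $H^{2}\times H^{1}$, yields a recovery sequence with $\limsup I_\rod^{h,\veps}\le I_\rod[y,b]$.

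The main obstacle I anticipate is the liminf step of transferring the pointwise nodal constraints $\abs{y_{h}'(z)}=\abs{b_{h}(z)}=1$ and the penalty bound into the a.e.\ identities $\abs{y'}=\abs{b}=1$ and $y'\cdot b=0$ for the weak limit, since the energy provides only $L^{2}$-control of $y_{h}''$ and $b_{h}'$; the argument has to combine the piecewise polynomial structure with the Hölder-type estimates inherited from that control. Once these structural constraints are in place, the term-wise liminf bounds and the convergence of the recovery sequence reduce to standard strong-times-weak arguments and interpolation error estimates.
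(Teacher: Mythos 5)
Your proposal is correct and follows essentially the same route as the paper: the same non-negative decomposition into squared $L^{2}$-norms, weak lower semicontinuity combined with uniform convergence of the lower-order factors (including $\norm{Q_{h}b_{h}-b}_{L^{\infty}}\to 0$) for the liminf, the uniform bound on the penalty term forcing $\curve'\perp b$ in the limit, and Lemma~\ref{lem:approx} plus nodal interpolation for the recovery sequence on which the penalty term vanishes identically.
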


{Note that there is no restriction on the ratio of $\eps$ and $h$.}

\begin{proof} 
 To establish the {\emph{Lim-inf inequality}},
 we consider \korr{an arbitrary} sequence
 $\br{(\curve_{h},b_{h})}_{\korr{h>0}}\subset\class[h]$
 and $(\curve,b)\in H^{2}\times H^{1}$
 with $\curve_{h}\rightharpoonup\curve$ in $H^{2}$ and
 $b_{h}\rightharpoonup b$ in $H^{1}$ as $h\searrow0$.
 In particular, we have $\curve_{h}\to\curve$ in $C^{1}$ and
 $b_{h}\to b$ in $C^{0}$.
 
 We have to show that if $\liminf_{{(h,\eps)\searrow0}}\Eh[\curve_{h},b_{h}]<\infty$
 then the limit point $(\curve,b)$ belongs to~$\class$
 (such that the formula for $\energy$ is applicable)
 and the $\liminf$-inequality holds.
 
 As the mesh size tends to zero as $h\searrow0$,
 the condition $\abs{\curve'}=\abs b=1$ is satisfied everywhere
 due to the uniform convergence $\curve_{h}'\to\curve'$,
 $b_{h}\to b$.
 Furthermore $L_{\mathrm{bc}}^{\rod}$ is continuous with respect
 to weak convergence. It remains to verify that $b$ is perpendicular
 to~$\curve'$.
 By the interpolation estimate we have
 \begin{align*}
 &\norm{\br{\curvehk'\cdot\bhk}^{2}-\cI_{h}^{1,0}\sq{\br{\curvehk'\cdot\bhk}^{2}}}_{\lzwei}
 \le Ch\norm{\br{\br{\curvehk'\cdot\bhk}^{2}}'}_{\lzwei} \\
 &\le Ch\norm{\curvehk'}_{L^{\infty}}\norm{\bhk}_{L^{\infty}}
 \norm{\curvehk'}_{\heins}\norm{\bhk}_{\heins}.
 \end{align*}
 As $\curvehk'\cdot\bhk\to\curve'\cdot b$ in $C^{0}$, we infer
 \[ \int_{0}^{L}\cI_{h}^{1,0}\sq{\br{\curvehk'\cdot\bhk}^{2}}\d x
 \to \int_{0}^{L} \br{\curve'\cdot b}^{2}\d x. \]
 As all terms of $I_\rod^{h,\eps}$ are non-negative, the term
 $\eps^{-1}\int_{0}^{L}\cI_{h}^{1,0}\sq{\br{\curvehk'\cdot\bhk}^{2}}\d x$
 is uniformly bounded which implies $\int_{0}^{L} \br{\curve'\cdot b}^{2}\d x = 0$.
 This gives $\curve'\perp b$, so $(\curve,b)\in\class$.

 \korr{We use $\norm{\Qh\bh+b}_{L^\infty} \le 2$ to derive
 \begin{align*}
  \norm{P_{b}\curveh''}^{2}
  &=\norm{P_{\Qh\bh}\curveh''}^{2} - \norm{b\cdot\curveh''}^{2} + \norm{\Qh\bh\cdot\curveh''}^{2} \\
  &\le\norm{P_{\Qh\bh}\curveh''}^{2} + 2\norm{\Qh\bh-b}_{L^{\infty}}\norm{\curveh''}^{2}.
 \end{align*}}
 The second term on the right-hand side tends to zero as $h\searrow0$ due to the
 boundedness of weakly converging sequences and
 \begin{equation}\label{eq:Qhbh-b}
 \norm{\Qh \bhk-b}_{L^{\infty}}
 \le\norm{\Qh \bhk-\bhk}_{L^{\infty}} + \norm{\bhk-b}_{L^{\infty}}
 \le C\sqrt{h} + o(1).
 \end{equation}
 This estimate also yields
 $\curveh'\times\Qh\bh\to \curve'\times b$
 which implies
 $\bh'\cdot(\curveh'\times\Qh\bh)\rightharpoonup b'\cdot(\curve'\times b)$
 as $h\searrow0$.
 
 Now the $\liminf$-inequality follows from the
 lower semicontinuity of the $L^{2}$-norm and the fact that
 the penalization term is non-negative
 (since it is the linear interpolation of a non-negative term).
 
 {\korr We turn to the \emph{Lim-sup inequality}.}
 Let $(\curve,b)\in\class$
 and $\delta\in(0,1]$. Of course, $\energy[\curve,b]<\infty$.
 {We aim at constructing a recovery sequence.
 We apply} Lemma~\ref{lem:approx}
 {to} obtain $(\widetilde\curve_{\delta},\widetilde b_{\delta})\in\class\cap\br{C^{\infty}\times C^{\infty}}$
 with $\norm{\widetilde\curve_{\delta}-\curve}_{\hzwei}\le\delta$
 and $\tnorm{\widetilde b_{\delta}-b}_{\heins}\le\delta$.
 We let $\br{\curve_{h},b_{h}}=\br{\cI_{h}^{3,1}\widetilde\curve_{\delta},\cI_{h}^{1,0}\widetilde b_{\delta}}$.
 Owing to the smoothness of the regularized frame, we have
 $\norm{\curve_{h}-\widetilde\curve_{\delta}}_{\hzwei}\le C_{\delta}h$
 and $\tnorm{b_{h}-\widetilde b_{\delta}}_{\heins}\le C_{\delta}h$.
 In particular, $(\curve_{h},b_{h})\in\class[h]$.
 We have to bound $\Eh[\curveh,\bh]-\energy[\curve,b]$ above
 by an expression that tends to zero.
 {For the first three terms of $\Eh[\curveh,\bh]$, w}e obtain
 \begin{align*}
 \norm{\curveh''}_{\lzwei}^{2} - \norm{\curve''}_{\lzwei}^{2}
 &\le \norm{\curveh''+\curve''}_{\lzwei}\norm{\curveh''-\curve''}_{\lzwei} \\
 &\le \br{2\norm{\curve''}_{\lzwei}+\delta+C_{\delta}h}\br{\delta+C_{\delta}h}, \\
 \norm{\bh'}_{\lzwei}^{2} - \norm{b'}_{\lzwei}^{2}
 &\le \br{2\norm{b'}_{\lzwei}+\delta+C_{\delta}h}\br{\delta+C_{\delta}h}, \\
 \norm{\proj[\Qh\bh]\curveh''}_{\lzwei}^{2} - \norm{\proj[b]\curve''}_{\lzwei}^{2}
 &\le\norm{\curveh''}_{\lzwei}^{2} - \norm{\curve''}_{\lzwei}^{2}
 +\norm{b\cdot\curve''}_{\lzwei}^{2} - \norm{\Qh\bh\cdot\curveh''}_{\lzwei}^{2}.
 \end{align*}
 For the {last two terms in the previous line}, we infer
 \begin{align*}
 &\norm{b\cdot\curve''}_{\lzwei}^{2} - \norm{\Qh\bh\cdot\curveh''}_{\lzwei}^{2}
 \le \norm{\Qh\bh\cdot\curveh''+b\cdot\curve''}_{\lzwei}\norm{\Qh\bh\cdot\curveh''-b\cdot\curve''}_{\lzwei} \\
 &\le \br{2\norm{b\cdot\curve''}_{\lzwei} + \norm{\Qh\bh\cdot\curveh''-b\cdot\curve''}_{\lzwei}} \norm{\Qh\bh\cdot\curveh''-b\cdot\curve''}_{\lzwei}
 \end{align*}
 and, recalling~\eqref{eq:Qhbh-b},
 \begin{align*}
 &\norm{\Qh\bh\cdot\curveh''-b\cdot\curve''}_{\lzwei}
 \le\norm{\Qh\bh-b}_{L^{\infty}}\norm{\curveh''}_{\lzwei}
 + \norm{\curveh''-\curve''}_{\lzwei} \\
 &\le\br{C\sqrt h+\delta+C_{\delta}h}\br{\norm{\curve''}_{\lzwei}+\delta+C_{\delta}h}
 + \delta+C_{\delta}h.
 \end{align*}
 {We treat the fourth term of $\Eh[\curveh,\bh]$ similarly
 as above. More precisely, we infer
 \begin{align*}
 &\norm{\bh'\cdot(\curveh'\times\Qh\bh)}_{\lzwei}^2
 - \norm{b'\cdot(\curve'\times b)}_{\lzwei}^2 \\
 &=\norm{\bh'\cdot(\curveh'\times\Qh\bh)
 + b'\cdot(\curve'\times b)}_{\lzwei}
 \norm{\bh'\cdot(\curveh'\times\Qh\bh)
 - b'\cdot(\curve'\times b)}_{\lzwei} \\
 &\le\br{\norm{\bh'}\norm{\curveh'}_{L^{\infty}}\norm{\bh}_{L^{\infty}}
 + \norm{b'}}_{\lzwei}\cdot{}\\
 &\quad{}\cdot\br{\norm{\bh'-b'}\norm{\curveh'}_{L^{\infty}}\norm{\bh}_{L^{\infty}}
 + \norm{b'}\norm{\curveh'-\curve'}_{L^{\infty}}\norm{\bh}
 + \norm{b'}\norm{\Qh\bh-b}}.
 \end{align*}
 Finally we deal with the penalty term.
 Due to $\widetilde\curve_{\delta}{}'\perp\widetilde b_{\delta}$
 we find that
 $\curveh'(z)\cdot\bh(z)=0$ for all $z\in\cN_{h}$.
 Therefore
 $\cI_{h}^{1,0}\sq{\br{\curveh'\cdot\bh}^{2}}(x)=0$
 for all $x\in I$
 which implies that the penalty term vanishes on the entire sequence.}
\end{proof}

\section{Iterative minimization}
\label{sect:scheme}

We linearize the pointwise constraints in our iterative algorithm for computing
minimizers of $I_\rod^{h,\veps}$. For a vector field $y_h\in \cS^{3,1}(\cT_h)^3$ we set 
\[
\cF_h[y_h] = \{ w_h \in \cS^{3,1}(\cT_h)^3: \, L_{\bc,y}^\rod[w_h] = 0,
\, y_h'(z)\cdot w_h'(z) = 0 \text{ f.a. } z\in \cN_h\}
\]
while for a vector field $b_h\in \cS^{1,0}(\cT_h)^3$ we define
\[\begin{split}
\cE_h[b_h] = \{v_h\in  \cS^{1,0}&(\cT_h)^3: L_{\bc,b}^\rod[v_h]=0, \, v_h(z)\cdot b_h(z) = 0
\text{ f.a. } z\in \cN_h \}.
\end{split}\]
The functionals $L_{\bc,y}^\rod$ and $L_{\bc,b}^\rod$
are the components of $L_{\bc}^\rod$
corresponding to the variables $y$ and $b$, respectively,
{assuming that the boundary conditions can be appropriately separated}.

In what follows we abbreviate the 
partial torsion term weighted by the factor $(1-\theta)$ by the 
nonlinear operator
\[
\ccN_h[y_h,b_h] = \frac{1-\theta}{2} \int_0^L (b_h'\cdot (y_h'\times Q_h b_h))^2 \dv{x}.
\]
The nonlinear term $\ccN_h$ does not occur if $\k\ge 2$. In this case 
the negative contribution in the energy functional can be treated using 
its separate concavity properties. Otherwise, an inductive argument is 
used in the stability analysis which requires a different treatment. We therefore set
\[
\tk = \begin{cases}
k & \text{if } \theta = 1, \\
k-1 & \text{if } \theta<1.
\end{cases}
\]

{We abbreviate
\[ \br{v,w}_{h} = \int_0^L \cI_h^{1,0}[v\cdot w] \dv{x}. \]}

We generate a sequence $(y_h^k,b_h^k)_{k=0,1,\dots}$ that
approximates a stationary configuration for $I_\rod^{h,\veps}$ 
with the following algorithm.
{It approximates a gradient flow that is determined
by the metrics $\br{\cdot,\cdot}_{\star}$ and $\br{\cdot,\cdot}_{\dagger}$.
These can be chosen quite general, however,
our stability result relies on certain embeddings, see~\eqref{eq:metrics} below.}

\begin{algorithm}[Gradient descent for elastic rods]\label{alg:descent_rods}
Choose an initial pair $(y_h^0,b_h^0) \in \cA_h$ and a step size $\tau>0$,
set $k=1$. \\
(1) Compute $d_t y_h^k\in \cF_h[y_h^{k-1}]$ such that for all $w_h \in \cF_h[y_h^{k-1}]$ we have
\[\begin{split}
(d_t y_h^k,w_h)_\star + \k ([y_h^k]'', w_h'') 
+ & \veps^{-1}([y_h^k]'\cdot b_h^{k-1},w_h' \cdot b_h^{k-1})_h \\
&=  \theta \big([Q_h b_h^{k-1}]\cdot [y_h^{k-1}]'',[Q_h b_h^{k-1}] \cdot [w_h]''\big) \\
& \qquad - \p_y \ccN_h [y_h^{k-1},b_h^{k-1};w_h]. 
\end{split}\]
(2) Compute $d_t b_h^k\in \cE_h[b_h^{k-1}]$ such that 
for all $r_h\in \cE_h[b_h^{k-1}]$ we have
\[\begin{split}
(d_t b_h^k, r_h)_\dagger 
+ \theta ([b_h^k]',r_h')+ & \veps^{-1}([y_h^k]'\cdot b_h^k,[y_h^k]' \cdot r_h)_h \\
&= \theta \big([Q_h b_h^{k-1}]\cdot [y_h^\tk]'',Q_h r_h \cdot [y_h^\tk]''\big)\\
& \qquad - \p_b \ccN_h[y_h^{k-1},b_h^{k-1};r_h].  
\end{split}\]
(3) Stop the iteration if 
\[
\|d_t y_h^k\|_\star + \|d_t b_h^k\|_\dagger  \le \veps_{\rm stop};
\]
otherwise, increase $k\to k+1$ and continue with~(1).
\end{algorithm}

{ Note that the $\ccN_{h}$-terms on the right-hand sides vanish in case $\theta=1$}
{which corresponds to $\k\ge2$.}

It is useful to view $d_t y_h^k$ and $d_t b_h^k$ as the unknowns
in Steps~(1) and~(2) instead of $y_h^k=y_h^{k-1}+\tau d_t y_h^k$
and $b_h^k = b_h^{k-1}+\tau d_t b_h^k$. 
The algorithm exploits the fact that the penalty term is separately
convex while the nonquadratic contribution to the torsion term is 
separately concave. Therefore, the decoupled semi-implicit 
treatment of these terms is natural and unconditionally energy stable
if $\theta=1$, i.e., in the bending-dominated case. 

\begin{proposition}[Convergent iteration]\label{prop:stability}
Assume that we have
\begin{equation}\label{eq:metrics}
{
\begin{aligned}
\|w_h''\| &\le c_\star \|w_h\|_\star, &
\|r_h'\| &\le c_\dagger \|r_h\|_\dagger, \\
\|w_h'\|_{L^{\infty}} &\le c_\star \|w_h\|_\star, &
\|r_h\|_{L^{\infty}} &\le c_\dagger \|r_h\|_\dagger
\end{aligned}
}\end{equation}
for all $(w_h,r_h)\in V^h_\rod$ with $L_\bc^\rod[w_h,r_h] = 0$.
Algorithm~\ref{alg:descent_rods} is well defined and produces a sequence
$(y_h^k,b_h^k)_{k=0,1,\dots}$ such that for a constant $c_0\ge 0$ and all $L\ge 0$ we have
\[
I_\rod^{h,\veps}[y_h^L,b_h^L] + {\tau}(1-c_0 \tau) \sum_{k=1}^L \big(\|d_t y_h^k\|_\star^2
+ \|d_t b_h^k\|_\dagger^2 \big) \le I_\rod^{h,\veps}[y_h^0,b_h^0].
\]
If $c_0 \tau \le 1/2$ then the iteration is energy decreasing, convergent, 
and the unit-length violation is controlled via 
\[
\max_{k=0,\dots,L} {\Big(}\||[y_h^k]'|^2-1\|_{L^\infty} + \||b_h^k|^2-1\|_{L^\infty} {\Big)}
\le \tau c_{\star,\dagger} e_{0,h},
\]
where $e_{0,h} = I_\rod^{h,\veps}[y_h^0,b_h^0]{{}<\infty}$
{and $c_{\star,\dagger}>0$ only depends on the metrics}.

If $\theta = 1$ then we may choose $c_0 =0$,
{i.e., stability and convergence hold
unconditionally}.
\end{proposition}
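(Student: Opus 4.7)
The plan is to test each update with its own increment: $w_{h}=\tau d_{t}y_{h}^{k}=y_{h}^{k}-y_{h}^{k-1}$ in step~(1) and $r_{h}=\tau d_{t}b_{h}^{k}=b_{h}^{k}-b_{h}^{k-1}$ in step~(2); both are admissible because $\cF_{h}[y_{h}^{k-1}]$ and $\cE_{h}[b_{h}^{k-1}]$ are linear. The convex bending, $b$-gradient and penalty contributions are then handled with the polarization identity $2(a,a-b)=\|a\|^{2}-\|b\|^{2}+\|a-b\|^{2}$, which telescopes them between consecutive iterates while producing a non-negative squared-increment residual. The explicitly treated concave coupling $-\tfrac{\theta}{2}\|Q_{h}b\cdot y''\|^{2}$, appearing on the right-hand side of both steps, is handled with the dual identity $2(b,a-b)=\|a\|^{2}-\|b\|^{2}-\|a-b\|^{2}$: transferred to the left-hand side with a sign change it again telescopes, leaving a \emph{positive} residual. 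Well-posedness of each step follows from positive definiteness of $(\cdot,\cdot)_\star$ respectively $(\cdot,\cdot)_\dagger$ on the tangent spaces, combined with the non-negative bending/$b$-gradient and penalty forms.

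The main obstacle is matching the telescoped sum with the true energy difference $I_\rod^{h,\veps}[y_{h}^{k},b_{h}^{k}]-I_\rod^{h,\veps}[y_{h}^{k-1},b_{h}^{k-1}]$. Two error terms appear when $\theta<1$. The first is the mixed second difference of the concave coupling $C(y,b)=-\tfrac{\theta}{2}\|Q_{h}b\cdot y''\|^{2}$, namely
\[
C(y_{h}^{k},b_{h}^{k})-C(y_{h}^{k},b_{h}^{k-1})-C(y_{h}^{k-1},b_{h}^{k})+C(y_{h}^{k-1},b_{h}^{k-1}),
\]
which arises because step~(2) uses $[y_{h}^{\tk}]''=[y_{h}^{k-1}]''$; the bilinearity of $Q_{h}b\cdot y''$ in $b$ and $y''$ bounds it pointwise by the product of the two increments. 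The second is the Taylor remainder of $\ccN_h$ about $(y_{h}^{k-1},b_{h}^{k-1})$ relative to its linearizations $\p_y\ccN_h$ and $\p_b\ccN_h$, which is quadratic in the increments with $L^{\infty}$-coefficients controlled by uniform bounds on the previous iterate. Young's inequality combined with the embeddings~\eqref{eq:metrics} converts both into $c_{0}\tau\br{\|d_{t}y_{h}^{k}\|_\star^{2}+\|d_{t}b_{h}^{k}\|_\dagger^{2}}$. Since $c_{0}$ depends on $L^\infty$-bounds on $(y_{h}^{k-1},b_{h}^{k-1})$, these have to be propagated inductively from the energy estimate at previous steps, explaining the ``inductive argument'' alluded to in the paper. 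When $\theta=1$, $\ccN_h$ vanishes identically and $\tk=k$ makes the mixed difference vanish as well, so $c_{0}=0$ is admissible and energy stability is unconditional.

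Summing the per-step inequality from $k=1$ to $L$ yields the claimed global bound. When $c_{0}\tau\le1/2$ the prefactor $(1-c_{0}\tau)$ is bounded below by $1/2$, so $\sum_{k}\br{\|d_{t}y_{h}^{k}\|_\star^{2}+\|d_{t}b_{h}^{k}\|_\dagger^{2}}\le 2\tau^{-1}e_{0,h}$, giving convergence of the energy and finite termination for any $\veps_{\mathrm{stop}}>0$. Finally, the unit-length violation is controlled nodewise via Pythagoras: the constraints $d_{t}y_{h}^{k}\in\cF_{h}[y_{h}^{k-1}]$ and $d_{t}b_{h}^{k}\in\cE_{h}[b_{h}^{k-1}]$ force $[d_{t}y_{h}^{k}]'(z)\perp[y_{h}^{k-1}]'(z)$ and $d_{t}b_{h}^{k}(z)\perp b_{h}^{k-1}(z)$ at each $z\in\cN_{h}$, so
\[
\abs{[y_{h}^{k}]'(z)}^{2}=\abs{[y_{h}^{k-1}]'(z)}^{2}+\tau^{2}\abs{[d_{t}y_{h}^{k}]'(z)}^{2}
\]
and analogously for $b_h^k$. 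Iterating from the initial unit-length data, taking the supremum over $z\in\cN_h$, and invoking the $L^{\infty}$-bounds in~\eqref{eq:metrics} together with the already-established summability of the dissipation yields the asserted $\tau c_{\star,\dagger}e_{0,h}$ bound.
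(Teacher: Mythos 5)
Your proposal follows essentially the same route as the paper: testing with the increments, exploiting separate convexity/concavity of the penalty and coupling terms (your polarization identities are exactly the equality case of the paper's convexity inequalities for these quadratic-in-each-variable functionals), bounding the mixed second difference of $G_h$ and the Taylor remainder of $\ccN_h$ by $c_0\tau$ times the dissipation via \eqref{eq:metrics}, propagating the $L^\infty$ bounds by induction, and using the nodal Pythagoras identity for the constraint violation. The only detail you gloss over is that the mean-value points lie between the previous and the \emph{current} iterate, so the paper first derives a reduced-energy estimate at step $k$ to obtain the coercivity bound $\|[y_h^k]''\|+\|[b_h^k]'\|\le D_{1,h}$ before closing the full energy law; this is a refinement of, not a departure from, your argument.
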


\begin{remark}
In case $\theta=1$, i.e., in case of low torsion rigidity,
{we actually only require the second line of~\eqref{eq:metrics}.
Moreover, if we even replace it by}
$\|w_h'\| \le c_\star \|w_h\|_\star$ and 
$\|r_h\| \le c_\dagger \|r_h\|_\dagger$
the estimates for the constraint
violation {still} hold in $L^1$ instead of $L^\infty$.
\end{remark}

{In general, the condition \eqref{eq:metrics} can be satisfied
if $\norm{\cdot}_{\star}$ and $\norm{\cdot}_{\dagger}$ are $H^{2}$- and $H^{1}$-seminorms
and if $L_\bc^\rod$ imposes suitable Dirichlet conditions on one
endpoint of the interval.
An $L^{2}$-gradient flow, however, requires stronger assumptions on the step size.}

\begin{proof}[Proof of Proposition~\ref{prop:stability}]
(a) We first consider the case $\theta=1$
for which $\tk=k$ and the nonlinearities related to the operator $\ccN_h$ disappear and the
asserted estimate holds with $c_0=0$. For this we note that the functional
\[
G_h[y_h,b_h] =  \frac{\theta}{2} \int_0^L (Q_h b_h \cdot y_h'')^2 \dv{x}
\]
is separately convex, i.e., convex in $y_h$ and in $b_h$. Therefore, we have that
\[\begin{split}
\p_y G_h[y_h^{k-1},b_h^{k-1};y_h^k-y_h^{k-1}] + G_h[y_h^{k-1},b_h^{k-1}] & \le G_h[y_h^k,b_h^{k-1}], \\
\p_b G_h[y_h^k,b_h^{k-1};b_h^k-b_h^{k-1}] + G_h[y_h^k,b_h^{k-1}] &\le G_h[y_h^k,b_h^k],
\end{split}\]
which by summation leads to the inequality 
\[
\p_y G_h[y_h^{k-1},b_h^{k-1};d_t y_h^k] + \p_b G_h[y_h^k,b_h^{k-1};d_t b_h^k]
\le d_t G_h[y_h^k,b_h^k].
\]
Similarly, the functional 
\[
P_{h,\veps}[y_h,b_h] = \frac{1}{2\veps} \int_0^L \cI_h^{1,0}[(y_h' \cdot b_h)^2] \dv{x}
\]
is separately convex and we have
\[
\p_y P_{h,\veps}[y_h^k,b_h^{k-1};d_t y_h^k] + \p_b P_{h,\veps}[y_h^k,b_h^k;d_t b_h^k]
\ge d_t P_{h,\veps}[y_h^k,b_h^k].
\]
We choose $w_h = d_t y_h^k$ and $r_h = d_t b_h^k$ 
in the equations of Steps~({1}) and~({2}) of Algorithm~\ref{alg:descent_rods} and find that
\[\begin{split}
\|d_t y_h^k\|_\star^2& + \|d_t b_h^k\|_\dagger^2  
+  d_t \big( \frac{\k}{2} \|[y_h^k]''\|^2 + \frac{\theta}{2} \|[b_h^k]'\|^2 \big) 
+ d_t P_{h,\veps}[y_h^k,b_h^k] \\
&\qquad \qquad\qquad \qquad  + \tau \big(\frac{\k}{2} \|[d_t y_h^k]''\|^2 + \frac{\theta}{2} \|[d_t b_h^k]'\|^2 \big) \\
& \le \p_y G_h[y_h^{k-1},b_h^{k-1};d_t y_h^k] + \p_b G_h[y_h^k,b_h^{k-1};d_t b_h^k] \le d_t G_h[y_h^k,b_h^k].
\end{split}\]
Since for $\theta=1$ we have that 
\[
I_\rod^{h,\veps}[y_h^k,b_h^k] = \frac{\k}{2} \|[y_h^k]''\|^2 + \frac{1}{2} \|[b_h^k]'\|^2 
- G_h [y_h^k,b_h^k] + P_{h,\veps}[y_h^k,b_h^k], 
\]
we deduce the asserted estimate. The nodal orthogonality conditions encoded in the spaces 
$\cF_h[y_h^{k-1}]$ and $\cE_h[b_h^{k-1}]$ lead to the relations
\[\begin{split}
|[y_h^k]'(z)|^2 & = |[y_h^{k-1}]'(z)|^2 + \tau^2 |[d_t y_h^k]'(z)|^2,  \\
|b_h^k(z)|^2 &= |b_h^{k-1}(z)|^2 + \tau^2 |d_t b_h^k(z)|^2.
\end{split}\]
Repeated application leads to 
\[
\big\||[y_h^k]'(z)|^2-1\big\|_{L^\infty} + \big\||b_h^k(z)|^2-1\big\|_{L^\infty} 
= \tau^2 \sum_{\ell=1}^k {\Big(}\|d_t b_h^\ell(z)\|_{L^\infty}^2+\|[d_t y_h^\ell]'\|_{L^\infty}^2{\Big)}.
\]
{Using~\eqref{eq:metrics} and the previously established bound for the discrete time derivatives}
proves the estimate for the constraint violation if $\theta =1$.

(b) We next turn to the case $\theta<1$ and argue by induction over $L\ge 0$. 
Assume that the estimates have been established for some $\widetilde{L} = L-1\ge 0$ with
{$c_0\ge c_{\star,\dagger} e_{0,h}$ (independent of $L$)}; the estimate trivially holds for $L=0$. Let $0\le k\le L$ and
choose $w_h = d_t y_h^k$ and $r_h = d_t b_h^k$ in Algorithm~\ref{alg:descent_rods}.
{From $c_{0}\tau\le1/2$ we infer
\begin{equation}\label{eq:bbound}
 \|[y_h^{k-1}]'\|_{L^\infty}^{2}\le 3/2,
 \qquad\qquad
 \|b_h^{k-1}\|_{L^\infty}^{2}\le 3/2.
\end{equation}}%
Arguing as above we find that
\[\begin{split}
& \|d_t y_h^k\|_\star^2 + \|d_t b_h^k\|_\dagger^2  
+ d_t \Big\{ \frac{\k}{2} \|[y_h^k]''\|^2 + \frac{\theta}{2} \|[b_h^k]'\|^2 
+ P_{h,\veps}[y_h^k,b_h^k]\Big\} \\
& \le G_h'[y_h^{k-1},b_h^{k-1};d_t y_h^k,d_t b_h^k] - \ccN_h'[y_h^{k-1},b_h^{k-1};d_t y_h^k,d_t b_h^k] \\
& \le 
{
\frac12 \big[c_{G'} \big(\|[y_h^{k-1}]''\|^2 + \|[y_h^{k-1}]''\|^4\big)  
+ c_{\ccN'} \big(\|[b_h^{k-1}]'\|^2 + \|[b_h^{k-1}]'\|^4\big)\big]
} \\
& \qquad +\frac12  \big(\|d_t y_h^k\|_\star^2 + \|d_t b_h^k\|_\dagger^2\big)
\end{split}\]
{where we used~\eqref{eq:metrics} and~\eqref{eq:bbound}
as well as $\norm{Q_{h}r_{h}}\le\norm{r_{h}}$
in the last step. Due to~\eqref{eq:theta}, t}%
he bounds at level $k-1$ imply that the first term on the right-hand side
is bounded by a constant $D_{0,h}'$. We consider the reduced energy functional 
\[
\hI_\rod^{h,\veps}[y_h^k,b_h^k] = \frac{\k}{2} \|[y_h^k]''\|^2 + \frac{\theta}{2} \|[b_h^k]'\|^2 
+ P_{h,\veps}[y_h^k,b_h^k] 
\]
and note that
{according to~\eqref{eq:bbound}}
we have
\[
\hI_\rod^{h,\veps}[y_h^{k-1},b_h^{k-1}] \le 4 I_\rod^{h,\veps}[y_h^{k-1},b_h^{k-1}].
\]
We thus deduce from the estimate
\[
\frac12 \big( \|d_t y_h^k\|_\star^2 + \|d_t b_h^k\|_\dagger^2\big)
+ d_t \hI_\rod^{h,\veps}[y_h^k,b_h^k] \le D_{0,h}'
\]
that, imposing the condition $\tau D_{0,h}' \le e_{0,h}$,  
\[
\frac{\tau}{2} \big( \|d_t y_h^k\|_\star^2 + \|d_t b_h^k\|_\dagger^2\big)
+ \hI_\rod^{h,\veps}[y_h^k,b_h^k] \le \tau D_{0,h}' + 4 e_{0,h} \le 5 e_{0,h} =: D_{0,h}.
\]
This estimate and the (unrestricted) coercivity of $\hI_\rod^{h,\veps}$ imply that
\begin{equation}\label{eq:coercivity}
 \|[y_h^k]''\| + \|[b_h^k]'\| \le D_{1,h}.
\end{equation}
To obtain the asserted full energy law we again choose 
$w_h = d_t y_h^k$ and $r_h = d_t b_h^k$ in Algorithm~\ref{alg:descent_rods} 
and note that as in~(a) we find 
\begin{multline*}
\|d_t y_h^k\|_\star^2 + \|d_t b_h^k\|_\dagger^2  
+  d_t I_\rod^{h,\veps}[y_h^k,b_h^k]  \\
\le d_t\ccN_h[y_h^k,b_h^k]- \ccN_h'[y_h^{k-1},b_h^{k-1};d_t y_h^k,d_t b_h^k] \\
    - d_t G_h[y_h^k,b_h^k]+ G_h'[y_h^{k-1},b_h^{k-1};d_t y_h^k,d_t b_h^k].
\end{multline*}
By the mean value theorem the terms on the right-hand side are equal to
\[
\tau \ccN_h''[\xi_h^{(1)},\eta_h^{(1)};d_t y_h^k, d_t b_h^k;d_t y_h^k,d_t b_h^k]
- \tau G_h''[\xi_h^{(2)},\eta_h^{(2)};d_t y_h^k, d_t b_h^k;d_t y_h^k,d_t b_h^k].
\]
Here, $\xi_h^{(\ell)}$ and $\eta_h^{(\ell)}$, $\ell=1,2$, are convex combinations of 
$y_h^{k-1}$ and $y_{h}^k$ and $b_h^{k-1}$ and $b_h^k$, respectively, and using
their uniform bounds%
{~\eqref{eq:bbound} and~\eqref{eq:coercivity} as well as~\eqref{eq:metrics}}
we find that
\[\begin{split}
&\tau \ccN_h''[\xi_h^{(1)},\eta_h^{(1)};d_t y_h^k, d_t b_h^k;d_t y_h^k,d_t b_h^k]
\le \tau  c_{\ccN''} D_{2,\ccN} \big(\|d_t y_h^k\|_\star^2 + \|d_t b_h^k\|_\dagger^2\big), \\
&\tau G_h''[\xi_h^{(2)},\eta_h^{(2)};d_t y_h^k, d_t b_h^k;d_t y_h^k,d_t b_h^k]
\le \tau  c_{G''} D_{2,G} \big(\|d_t y_h^k\|_\star^2 + \|d_t b_h^k\|_\dagger^2\big).
\end{split}\]
With %
{$c_0 \ge c_{\ccN''} D_{2,\ccN} + c_{G''} D_{2,G}$}
we deduce the asserted estimate after
multiplication by $\tau$ and summation over $k=1,2,\dots,L$.
{The second estimate is established as in (a), provided the
step size $\tau$ is bounded accordingly.}
\end{proof}

\section{Experiments}\label{sect:exper}

\begin{table}
\begin{tabular}{lcccccccccc}\hline
 Section & $L$ & $N$ & $\kappa$ & $\beta_{\ini}$ & $h_{\max}$ & $\eps$ & bc & $\rho$
  \\\hline
\ref{sect:uniframe}	& $2\pi$ & $100$ & $2$ & $*$ & $0.0628$ & $0.0628$ & c & --- \\
\ref{sect:michell}	& $2\pi$ & $400$ & $3/2$ & $*$ & $0.0157$ & $10^{-5}$ & p & --- \\
\ref{sect:overtwist}& $2\pi$ & $400$ & $2$ & $5$ & $0.0157$ & $0.0010$ & p &  --- \\
\ref{sect:f8}		& $4K(m)$ & $400$ & $*$ & $0$ & $0.0232$ & $0.0232$ & p &  --- \\
\ref{sect:clamped}	& $*$ & $400$ & $2$ & $4$ & $0.0351$ & $0.0010$ & c &  --- \\
\ref{sect:imper} (a)& $2\pi$ & $800$ & $2$ & $5$ & $0.0079$ & $0.0079$ & p &  $0.1$ \\
\ref{sect:imper} (b)& $*$ & $400$ & $2$ & $4$ & $0.0351$ & $0.0010$ & c &  $0.1$ \\
\hline 
\end{tabular}
\bigskip

\caption{Modeling and discretization parameters {for the experiments presented in Section~\ref{sect:exper}.}
{An asterisk refers to details given in the corresponding text.}%
}\label{tab:param}
\end{table}

Here we report on some numerical experiments.
The parameters that have been used are shown in Table~\ref{tab:param}.
The length of the curve is denoted by~$L$,
the number of nodes by~$N$,
the maximum step size $h_{\max}$ is normally close to $L/N$
where $L$ is the length of the curve.
{Except for Experiment~\ref{sect:uniframe},
t}he initial curve has constant twist rate $\beta_{\ini}$.
{While the director~$b$ will always be clamped
on both ends of the rod,
the boundary conditions~(bc) for the curve are either periodic~(p) or
clamped on both ends~(c).
Unless otherwise stated}
the penalization parameter has been chosen to be $\eps=\tfrac{2\pi}N$.
We always use the time step size $\tau = \tfrac1{8}{h_{\max}}$.
In some cases we also added some small perturbation to the initial curve
which is specified in the text.

We briefly comment on the legend of the corresponding energy plots
where the horizontal axis shows the iteration steps of the evolution.
Of course, ``bending'' refers to the {scaled} bending energy $\tfrac\kappa2\norm{\curveh''}_{L^{2}}^{2}$,
``twisting'' to the {scaled} twisting energy
$\frac{\theta}{2} \norm{\bh'}_{\lzwei}^{2}
 - \frac{\theta}{2} \norm{\curveh''\cdot b_{h}}_{\lzwei}^{2}
  + \frac{1-\theta}{2} \norm{\bh'\cdot(\curveh'\times\Qh\bh)}_{\lzwei}^2$
while ``total twist'' means the functional $\Tw$ defined {in Section~\ref{sect:tot-twist}},
``potential'' to the 
tangent-point energy (see {Section}~\ref{sect:imper}), ``penalizing'' to $P_{h,\veps}$,
and ``total'' is $I_\rod^{h,\veps}$.

In general the total twist is scaled differently, i.e.,
there is a second axis on the right margin of the energy plots
while all other values refer to the axis on the left margin
of the coordinate system.
\korr{A coloring of a curve represents curvature values.}

{For typical discretizations with $400$ nodes the overall runtime
of our implementation in Matlab
on a server (3.1~GHz)
took about $3$ minutes per $10{,}000$ iteration steps.
An impermeable rod with $800$ nodes in Experiment \ref{sect:imper} (b),
with an assembly of the self-avoidance potential in C,
requires about $15$ minutes per $10{,}000$ iteration steps.}

{We would like to stress that we observe energy monotonicity
for all our experiments, confirming the stability features
discussed in Proposition~\ref{prop:stability}.}

\subsection{Uniform twist rate}
\label{sect:uniframe}
\newcommand{\bild}[1]{\fbox{\includegraphics[scale=.2,trim=200 480 250 450,clip]{a-#1.jpg}}\makebox[0ex][r]{\makebox[0ex][r]{\tiny#1}\hspace{0ex} }\,\ignorespaces}

\begin{figure}
\raisebox{1ex}{\begin{minipage}[b]{6cm}
\bild{1}

\bild{100}

\bild{400}

\bild{1000}

\bild{4000}
\end{minipage}}
\hfill
\includegraphics[scale=.33]{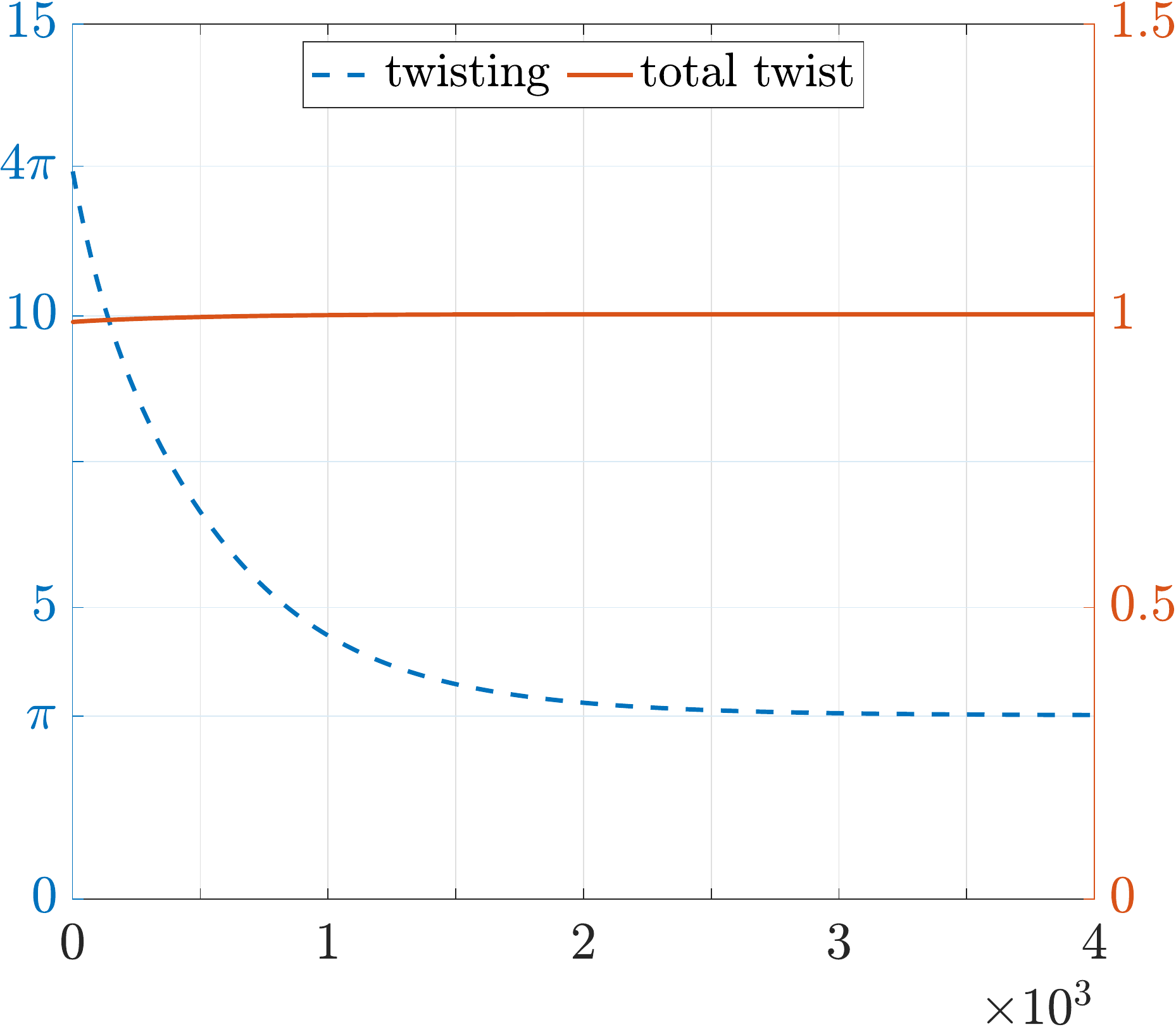}

\caption{{(Experiment~\ref{sect:uniframe})
\newline\emph{Left}:
\korr{States} of the evolution at several iteration steps.
The initial rod consists of a straight line, framed with a non-uniform twist rate.
\newline\emph{Right}: The twisting energy exhibits the dissipation of the twist rate after about 4{,}000 iteration steps
while the total twist is constant.}}
\label{fig:uniframe}
\end{figure}

According to {Section}~\ref{sect:optframe}, stationary frames have
constant twist rate.
We expect that a non-uniform twist configuration will
{become constant within the evolution}.

We start with a straight line curve of length $2\pi$
with uniform twist rate $4$ on $[0,\frac\pi2]$
and $0$ on $[\frac\pi2,2\pi]$.
{No perturbation of the initial rod was used.}

Initial stage and some intermediate steps from the evolution
are visualized in Figure~\ref{fig:uniframe}.
After $4{,}000$ steps the twist rate is nearly constant
and amounts to~$1$.

All energy values are neglectable except for the twisting energy
which virtually coincides with the total energy.
Furthermore, at initial and final step the twisting energy
data matchs quite closely the expected values
of $\tfrac12\int_{0}^{2\pi}\beta(s)^{2}\d s$
which amount to $4\pi$ and $\pi$.

Looking at the following experiments
whose initial configurations all have a uniform twist rate,
{we find this property being violated
in the course of the iteration.
In first place, this is due to the spatial behavior of the curve
which does not seems to allow for an simultaneous reaction
by the director.
Eventually, uniform twist rate is restored, at least when the evolution has reached a stationary configuration,
cf.\@ Section~\ref{sect:optframe}.

We can test for uniform twist rate by computing the quotient
of $\frac{2\pi^{2}}L\Tw^{2}$ over the twisting energy.
As to Experiments~\ref{sect:michell} to~\ref{sect:f8},
throughout the evolutions
this number stays close to~$1$ for most of the time
where we detect} a relative deviation below~$\frac{1}{200}$.
The twist rate for Experiment~\ref{sect:overtwist} is
plotted in Figure~\ref{fig:overplot} (right).
{In Experiment~\ref{sect:f8} we ignore the initial
steps where the twist is zero.
In the other cases,
we also see that the twist rate eventually dissipates 
but it can last a relatively long time}.

\subsection{Michell’s instability}
\label{sect:michell}

\renewcommand{\bild}[1]{\fbox{\includegraphics[scale=.11,trim=280 300 240 300,clip]{b#1.jpg}}\makebox[0ex][r]{\makebox[0ex][r]{\tiny#1}\hspace{0ex} }\,\ignorespaces}

\begin{figure}

\bild{10000}
\bild{50000}
\bild{80000}
\bild{90000}

\bild{100000}
\bild{112000}
\bild{113000}
\bild{114000}

\renewcommand{\bild}[1]{\fbox{\includegraphics[scale=.11,trim=280 230 240 200,clip]{b#1.jpg}}\makebox[0ex][r]{\makebox[0ex][r]{\tiny#1}\hspace{0ex} }\,\ignorespaces}

\bild{115000}
\bild{120000}
\bild{140000}
\bild{200000}

\caption{{In Experiment~\ref{sect:michell} we start
the evolution with a round circle, framed with
uniform twist rate $\beta_{\ini}=4.2>\beta_{*}$.
The total twist is reduced by self-penetration in the course of the evolution around iteration step~$k=112{,}500$.
It ends with another round circle, situated in a different plane.}}
\label{fig:michell}
\end{figure}

\begin{figure}
\raisebox{1ex}{
\includegraphics[scale=.33]{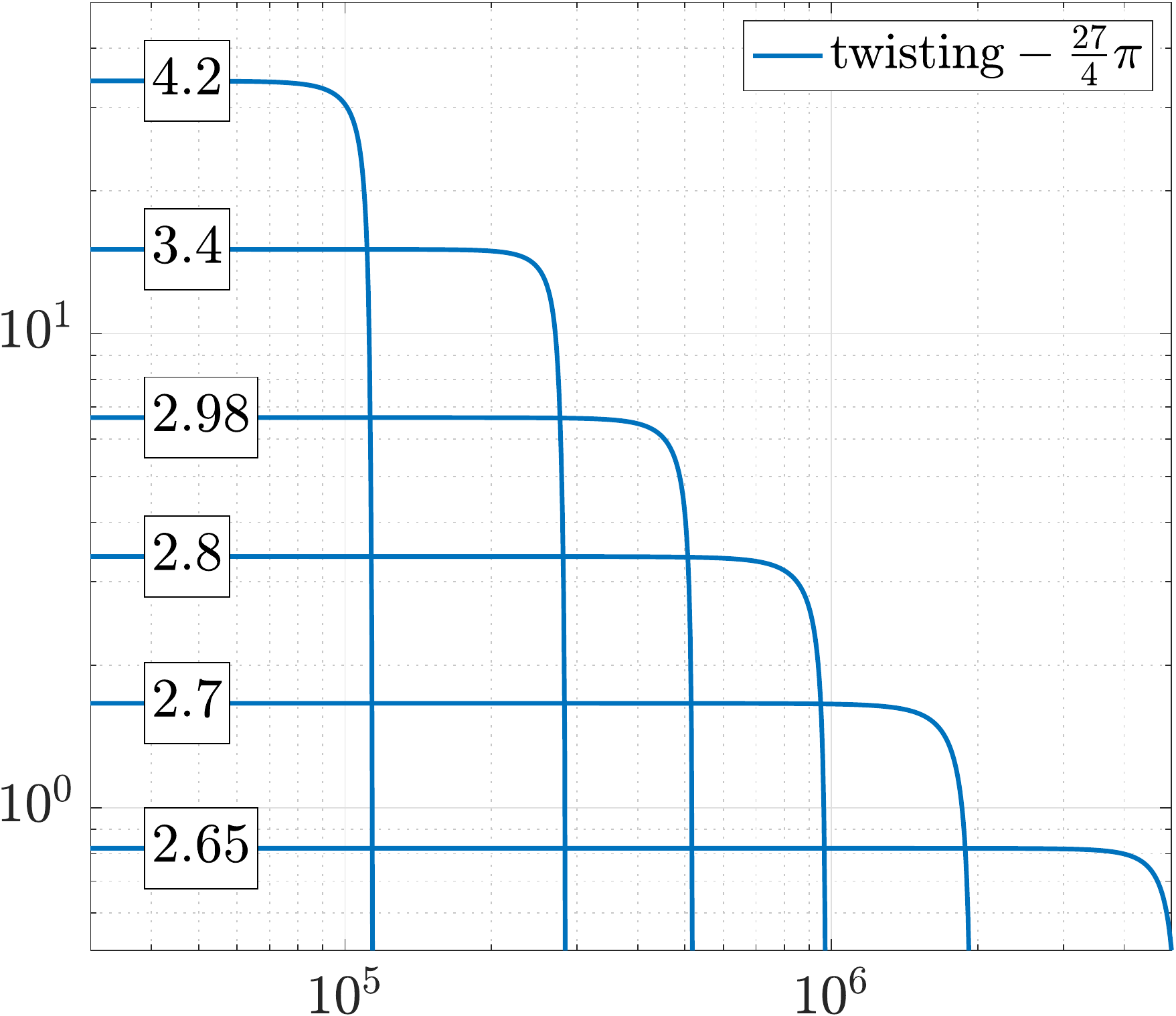}}
\hfill
\includegraphics[scale=.33]{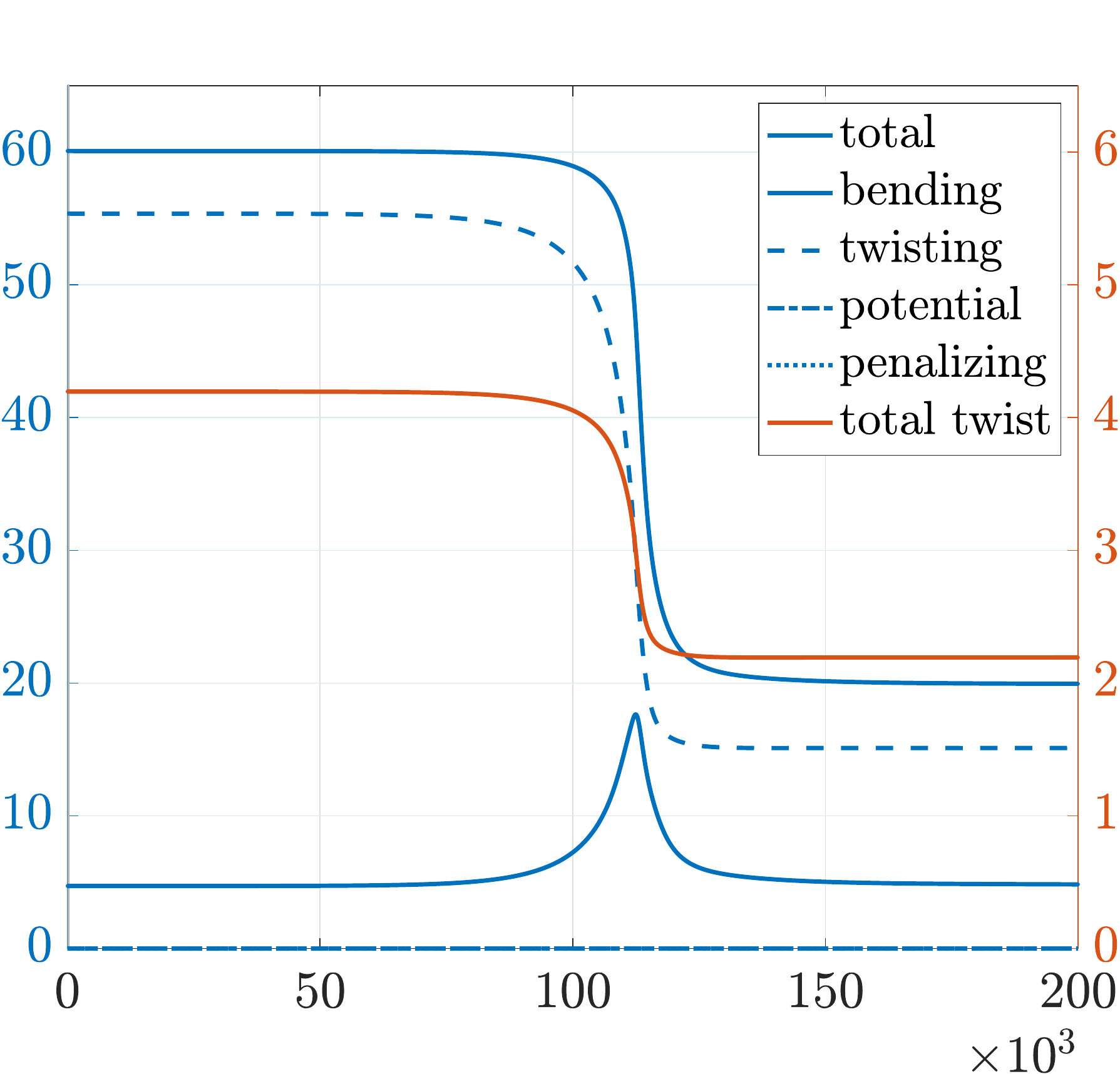}

\caption{{(Experiment~\ref{sect:michell})
\newline\emph{Left}:
In order to experimentally confirm Zajac’s threshold,
we repeat the evolution depicted in Figure~\ref{fig:michell} above
for different twist rates $\beta_{\ini}>\beta_{*}$.
The evolutions turn out to be very similar; essentially they only differ in speed.
The (logarithmically scaled) twisting profiles
reveal a drastic reduction of the twisting energy
(due to the self-penetration of the curve).
The smallest iteration step at which the twisting energy is below
$\tfrac{27}4\pi$ serves as a threshold for the speed of the evolution.
\newline\emph{Right}:
The energy plot for the evolution from Figure~\ref{fig:michell} where $\beta_{\ini}=4.2$.
Here the twisting energy decay occurs around iteration step $114{,}200$.}}
\label{fig:michplot}
\end{figure}

We aim at experimentally confirming Zajac's threshold
$\beta_{*} = 2\pi\sqrt3\kappa/L$, see {Section}~\ref{sect:zajac}.

We consider the initial curve
${y(s) =}\br{\cos s,\sin s,0}^{\top}$
with the frame
\[ {b(s) = }
\cos(\beta_{\ini} s) \begin{pmatrix} -\cos s\\ -\sin s\\0 \end{pmatrix}
+\sin(\beta_{\ini} s) \begin{pmatrix} 0\\0\\1 \end{pmatrix}, \qquad s\in[0,2\pi]. \]
In order to break symmetry which seems to prevent
rod configurations from leaving even energetically unfavorable states,
a slight perturbation {has been applied} to the initial curve, namely
\begin{equation}\label{eq:perturb}
 s\mapsto \tfrac1{1000}\sin(7s)
\end{equation}
perpendicularly to the plane (i.e., in $z$-direction);
the frame is corrected accordingly.

In order to quantitatively verify Zajac's threshold,
we had to choose a rather high penalty coefficient,
namely $\eps=10^{-5}$.
For $\kappa=3/2$, we obtain
\[ \beta_{*}=\tfrac32\sqrt3\approx2.5981. \]

In Figure~\ref{fig:michplot} (left) we plot the
twisting energy of several evolutions using logarithmic scales
on both axes.
From top to bottom, the {profiles} correspond to initial values of
{$\beta_{\ini}=\beta_{*}+\tfrac{2^{\ell}}{10}$ for $\ell=-1,0,1,2,3,4$.}
More precisely, we plot
the twisting energy of the evolutions corresponding
to different values of $\beta_{\ini}$ minus
the twisting energy of the configuration
at $\beta_{*}$, i.e.,
$\tfrac12\int_{0}^{2\pi}\beta(s)^{2}\d s - \tfrac{27}4\pi$,
{which seems to be stationary}.
Of course, values less than $\tfrac{27}4\pi\approx21.2058$
are ignored.

Experimentally we find that evolutions for different initial values of $\beta_{\ini}$ seem to
be very similar and essentially only vary in speed.
The region of iteration steps where the twisting energy drastically falls
is a good indication for the latter.
There is one caveat---probably due to symmetry it turned out
that the evolution corresponding to the initial values $\beta_{\ini}=3$
is remarkably slower than expected.
Therefore we chose $\beta_{\ini}=2.98$ instead.

The plot in Figure~\ref{fig:michplot} (left) indicates a reciprocal dependence of the evolution speed on $\beta_{\ini}-\beta_{*}$.
{For values $\beta_{\ini}\le\beta_{*}$
the initial configuration remained unchanged (at least until step $k=5\cdot10^{6}$).}
This confirms Zajac's threshold as desired.

We show a typical full energy profile in Figure~\ref{fig:michplot} (right)
for the initial value $\beta_{\ini}=4.2$.
Some iteration steps are visualized in Figure~\ref{fig:michell}.
The corresponding plots for the other initial values of $\beta_{\ini}$
essentially differ by the scaling of the horizontal axis
and the simulations looks very similar.
Initial and final stage are round circles which correspond
to the (unique) global bending energy minimum of $L=2\pi$
among all closed curves.

Note that the total twist is reduced by precisely~$2$
from $\beta_{\ini}=4.2$ to $\beta=2.2$.
In light of {Section}~\ref{sect:mintt} a second reduction would be
possible as well.
 However,
in contrast to {Experiment}~\ref{sect:overtwist} below,
the {gradient of the energy does not seem to be steep enough}
to invest the amount of additional bending required for another self-penetration.
As $2.2<\beta_{*}$ the evolution becomes stationary
due to Michell's instability.

\subsection{Reducing twist by self-penetration}
\label{sect:overtwist}

\renewcommand{\bild}[1]{\fbox{\includegraphics[scale=.1,trim=260 300 220 300,clip]{c#1.jpg}}\makebox[0ex][r]{\makebox[0ex][r]{\tiny#1}\hspace{0ex} }\,\ignorespaces}

\begin{figure}

\bild{1}
\bild{1000}
\bild{30000}
\bild{40000}

\bild{45000}
\bild{46000}
\bild{46200}
\bild{46300}

\bild{46400}
\bild{46500}
\bild{46600}
\bild{46700}

\bild{46800}
\bild{47000}
\bild{48000}
\bild{48400}

\bild{48800}
\bild{49100}
\bild{49500}
\bild{50000}

\bild{51000}
\bild{55000}
\bild{60000}
\bild{100000}

\caption{Evolution of a round circle twisted by five full rotations
{in Experiment~\ref{sect:overtwist}}.
Twist is reduced due to self-penetrations of the rod
{around iteration steps $k=46{,}400$ and $k=48{,}800$}.
Initial and final curves are round circles {which appear to lie} in the same plane.
{Plots of energy profile and twist rate
are shown in Figure~\ref{fig:overplot} below.}}
\label{fig:overtwist}
\end{figure}

\begin{figure}
 \includegraphics[scale=.29]{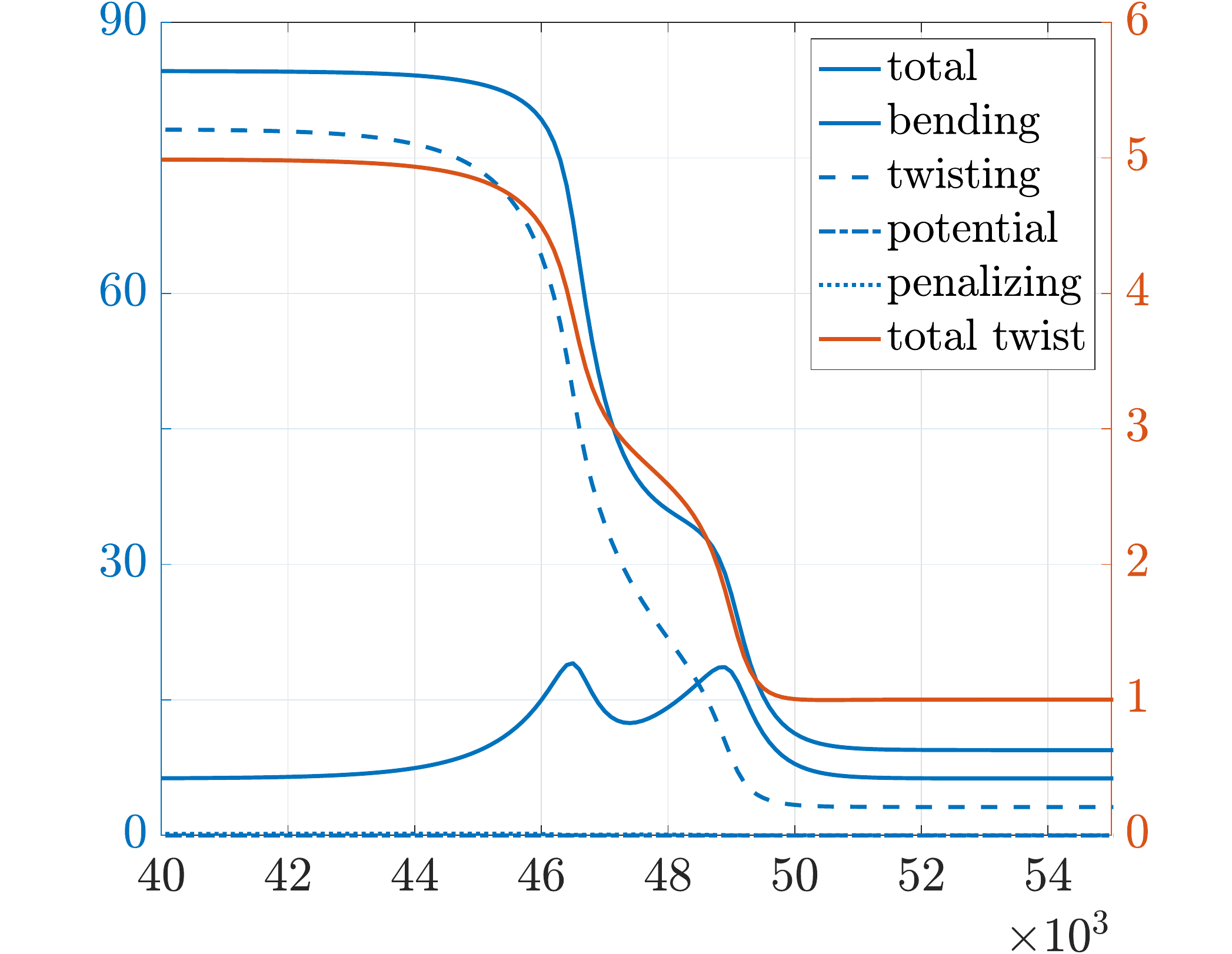}
 \hfill
 \includegraphics[scale=.29]{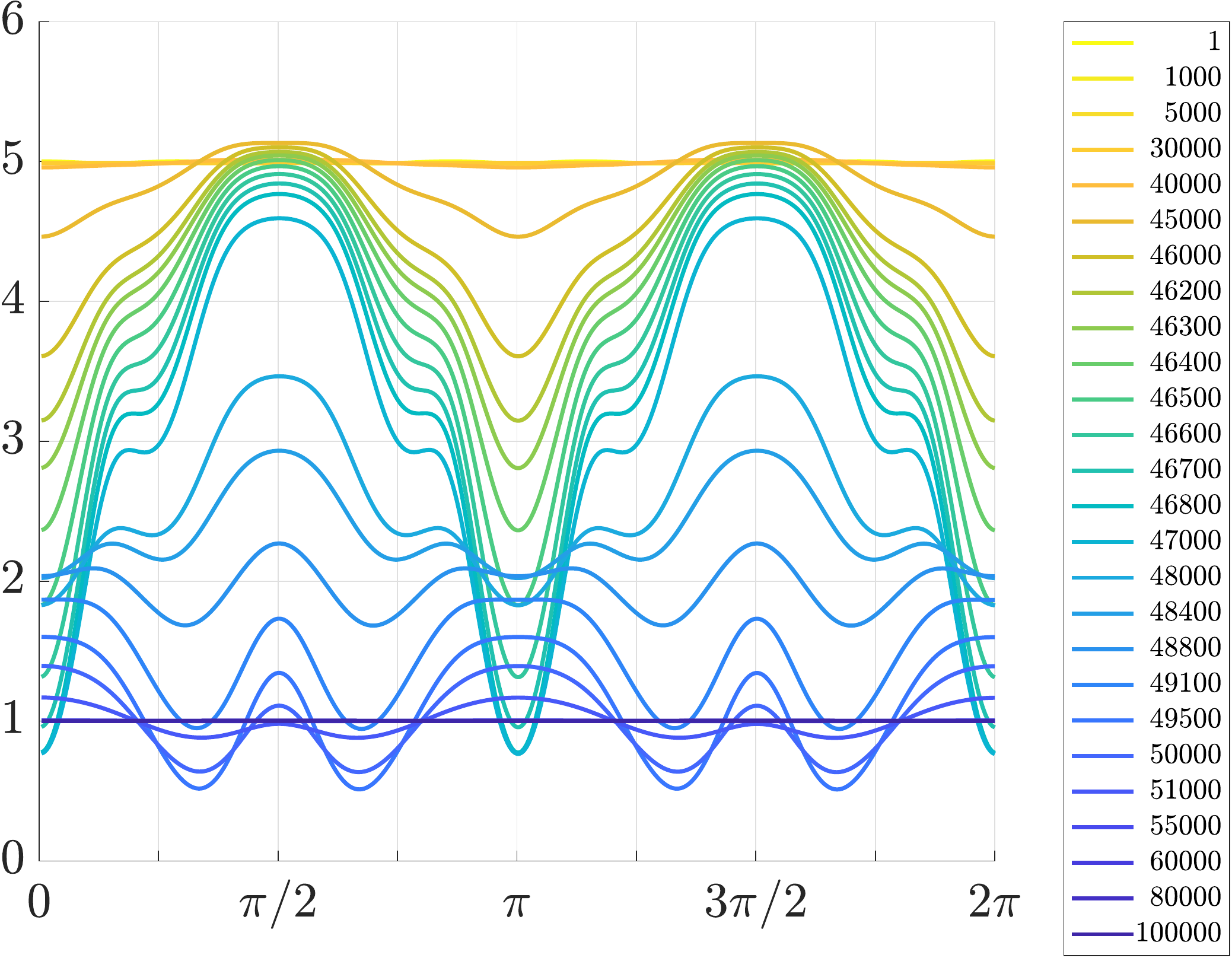}

 \caption{{(Experiment~\ref{sect:overtwist})
 \newline\emph{Left}:
 The energy plot reveals two peaks of the bending
 profile corresponding to the two self-penetrations.
 The energy profile is virtually constant for the iteration steps
 outside the range shown here.
 \newline\emph{Right}:
 Plot of the twist rate~$\beta$ for the iteration
 steps visualized in Figure~\ref{fig:overtwist}.
 Apart from the initial configuration,
 the twist rate is non-uniform throughout the evolution.
 It eventually dissipates around iteration step $100{,}000$.}}
 \label{fig:overplot}
\end{figure}

We repeat the experiment from Section~\ref{sect:michell} with
$\kappa=2$ and $\beta_{\ini}=5$, i.e., for a continuous frame.
Here we have $\beta_{*}\approx3.4641$.
The same slight perturbation {has been added} to the initial curve as before in~\eqref{eq:perturb}.

In this case we observe twist reduction by two consecutive self-penetrations
although the evolution could possibly stop after the first one
since the twist value is then already below the threshold $\beta_{*}$.
Obviously,
{the gradient of the energy
{of the noncircular configuration around iteration step $k=48{,}000$} is so steep}
that Michell's instability does not play any role here.

The evolution is visualized in Figure~\ref{fig:overtwist} and
the energy values are plotted in Figure~\ref{fig:overplot} (left).
As the initial and final configurations are circular
and the frame closes up (because of $\beta_{\ini}\in\Z$),
the total twist is integer at the beginning and end of the
evolution (cf.\@ {Section}~\ref{sect:calu}).

The twist rate, however, does not stay uniform throughout the
evolution as can be seen from Figure~\ref{fig:overplot} (right).
Eventually the twist will be balanced similarly to {Experiment}~\ref{sect:uniframe}.

\subsection{Planar figure eight}
\label{sect:f8}

\renewcommand{\bild}[1]{\fbox{%
\scalebox{-1}[1]{%
\includegraphics[scale=.1,trim=260 300 140 270,clip]{d#1.jpg}}}\makebox[0ex][r]{\makebox[0ex][r]{\tiny#1}\hspace{0ex} }\,\ignorespaces}

\begin{figure}

\bild{1000}
\bild{20000}
\bild{30000}
\bild{35000}

\bild{40000}
\bild{45000}
\bild{50000}
\bild{60000}
\caption{Evolution of a {twist-free} planar elastic figure eight
{from Experiment~\ref{sect:f8} with $\k=0.7$}.
The initial curve is (almost) planar and evolves to a circle
in a plane which {seems to be} perpendicular to the initial configuration.
{The corresponding frame performs one full rotation.
Obviously the bending forces dominate in this case.
The energy plot is shown in Figure~\ref{fig:f8-plot} (right).}}
\label{fig:f8}
\end{figure}

\begin{figure}
 \raisebox{2ex}{\includegraphics[scale=.35]{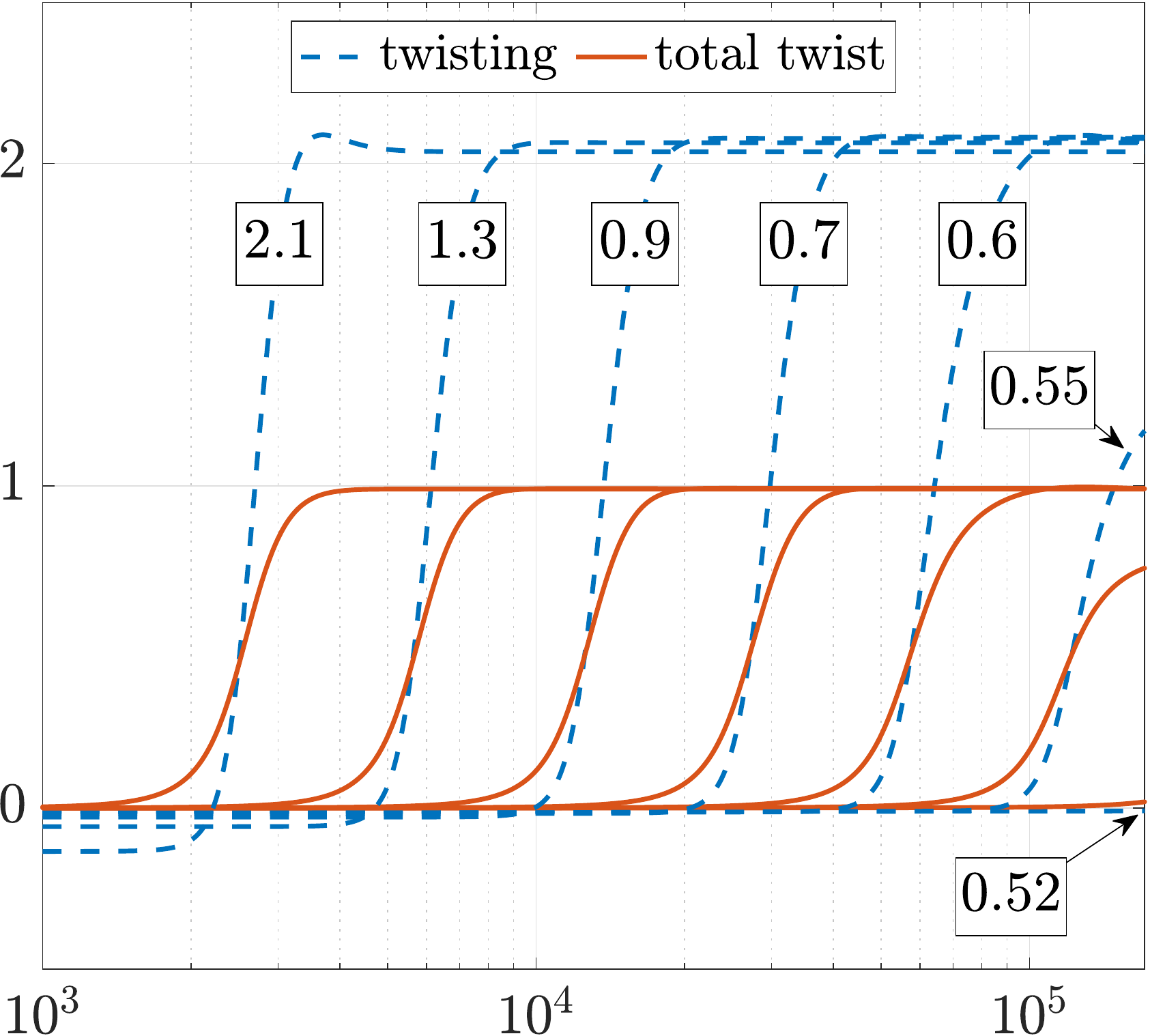}}\hfill
 \includegraphics[scale=.35]{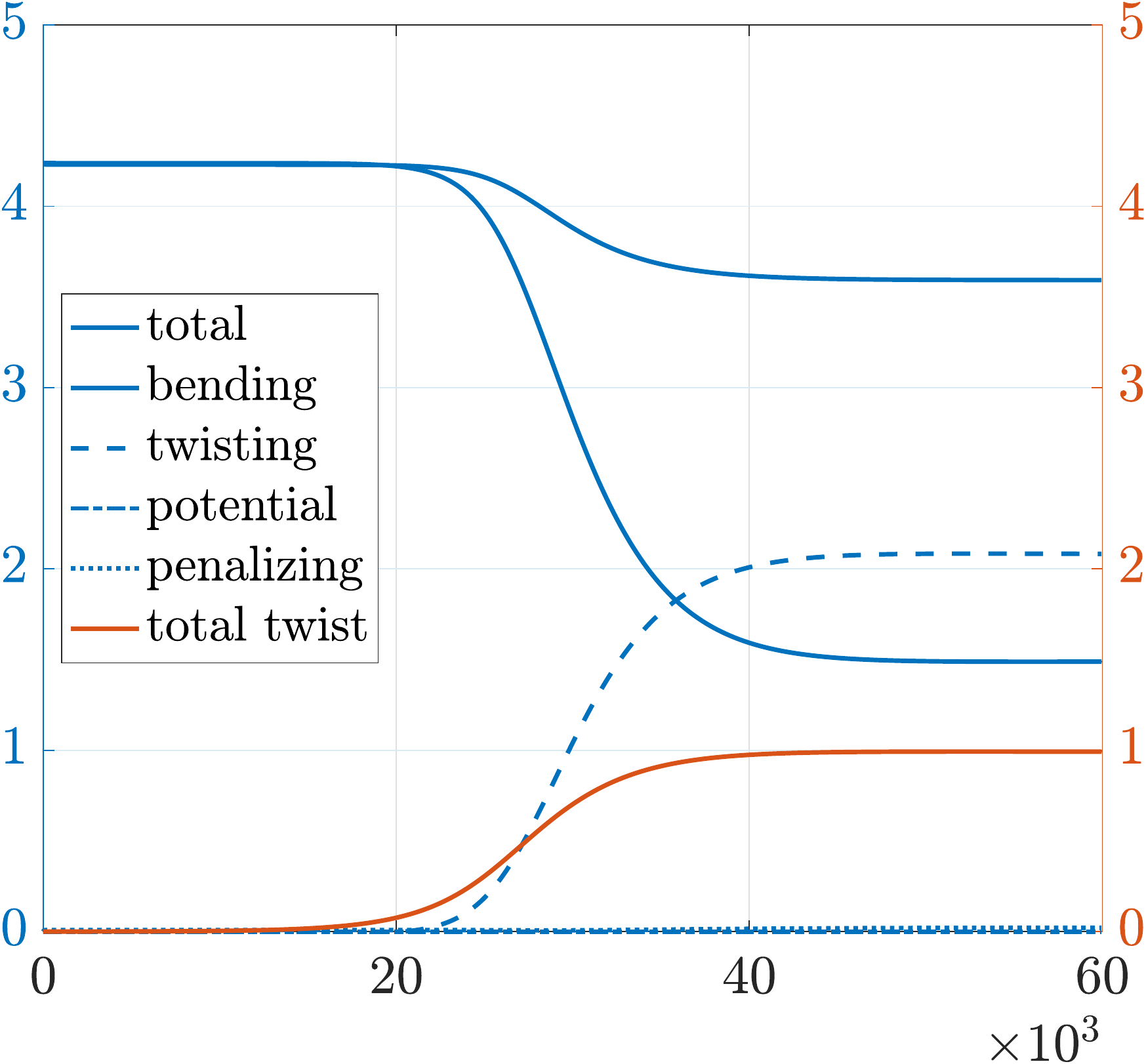}
 \caption{{(Experiment~\ref{sect:f8})
 \newline\emph{Left}:
 Aiming at experimentally confirming the threshold $\kappa=\tfrac12$,
 we study (semi-logarithmically scaled) profiles of twisting energy and total twist
 for several evolutions of the figure-eight curve.
 The smallest iteration step at which the total twist is above	
$\tfrac12$ serves as a threshold for the speed of the evolution.
 \newline\emph{Right}:
 \protect\scalebox{.95}[1]{Energy plot for the evolution in Figure~\ref{fig:f8} where $\k=0.7$.}}}
 \label{fig:f8-plot}
\end{figure}

Any closed \emph{planar} elastica (i.e., a critical point of the bending energy)
is either a circle or a planar figure-eight curve,
possibly several times covered, cf.\@ Sachkov~\cite{sachkov}.

Explicit formulae for elastica based on special functions have
been computed in the 19th century, see the references in Levien~\cite{levien}.
Here we make use of an arclength parametrization given by
Dall'Acqua and Pluda~\cite{DAP} which relies on earlier work by
Djondjorov et al.~\cite{Djondjorov}, 
namely
\[ {y(s) = }
\begin{pmatrix}
2E(\am(s,m),m)-s \\ 2\sqrt m \cn(s,m)
\end{pmatrix}, \qquad s\in\R/4K(m)\Z. \]
Here $E$ denotes the incomplete elliptic integral of the second kind
and $K$ the complete elliptic integral of the first kind
while $\am$ is the Jacobi amplitude function and $\cn$
the elliptic cosine function, cf.~\cite{DAP}.
The (signed) curvature amounts to $s\mapsto-2\sqrt m \cn(s,m)$.
The figure-eight curve corresponds to
$m\approx0.82611$ which is the uniquely defined number in $(0,1)$
with $2E(\tfrac\pi2,m)=K(m){{}\approx2.321}$.

In order to break the symmetry
which may result in an unstably stationary configuration,
a slight perturbation similarly to~\eqref{eq:perturb}
{has been added to the initial curve}, namely
$s\mapsto \tfrac1{1000}\sin\br{7\cdot\tfrac{2\pi}{4K(m)} s}$,
perpendicularly to the plane (i.e., in $z$-direction);
the frame is corrected accordingly.

According to Ivey and Singer~\cite[Sect.~7]{ivey-singer}
the twist-free planar figure-eight is stable if
$\kappa=\cb/\ct<\tfrac12$ and unstable if $\kappa>\tfrac12$.

We performed several evolutions whose energy plots
are depicted in Figure~\ref{fig:f8-plot} (left).
As in {Experiment}~\ref{sect:michell}, the evolutions are very similar and essentially only differ in speed.
In each case the total twist is raised from zero to one
which is still in accordance with the bound $\abs\Tw\le1$ in {Section}~\ref{sect:mintt}.
 Mind the semi-logarithmic scaling of the horizontal axis.
Negative values of the twisting energy are due to discretization errors
and tend to zero when choosing a larger number of nodes.

Snapshots of the evolution for $\kappa=0.7$ can be found
in Figure~\ref{fig:f8}. We observe an evolution to a round circle with one full twist.
For the same reason as in {Experiment}~\ref{sect:overtwist} we face integer values of $\Tw$ at beginning and end of the evolution.
The full energy plot is depicted in Figure~\ref{fig:f8-plot} (right).

The {parameters} $\kappa$
of the profiles shown in Figure~\ref{fig:f8-plot} (left)
amount to
{$\kappa=\tfrac12+\tfrac{2^{\ell}}{10}$ for $\ell=-2,-1,0,1,2,3,4$.}
The red solid line corresponding to {$\k=0.52$} is just
about to lift at the right margin.
The speed of the evolution seems to reciprocally depend on $\kappa-\tfrac12$,
suggesting that the evolutions for $\kappa\le\tfrac12$ will be stationary.
This confirms the threshold by Ivey and Singer as desired.

\subsection{Open clamped rods}
\label{sect:clamped}

\renewcommand{\bild}[1]{\fbox{%
\scalebox{-1}[1]{%
\includegraphics[scale=.13,trim=260 360 160 300,clip]{e#1.jpg}}}\makebox[0ex][r]{\makebox[0ex][r]{\tiny#1}\hspace{0ex} }\,\ignorespaces}

\begin{figure}

\bild{1}	\bild{7500}		\bild{17000}

\bild{500}	\bild{8000}		\bild{17500}

\bild{2000}	\bild{9000}		\bild{18000}

\renewcommand{\bild}[1]{\fbox{%
\scalebox{-1}[1]{%
\includegraphics[scale=.13,trim=260 360 160 220,clip]{e#1.jpg}}}\makebox[0ex][r]{\makebox[0ex][r]{\tiny#1}\hspace{0ex} }\,\ignorespaces}

\bild{3000}	\bild{10000}	\bild{20000}

\renewcommand{\bild}[1]{\fbox{%
\scalebox{-1}[1]{%
\includegraphics[scale=.13,trim=260 360 160 340,clip]{e#1.jpg}}}\makebox[0ex][r]{\makebox[0ex][r]{\tiny#1}\hspace{0ex} }\,\ignorespaces}

\bild{4000}	\bild{12000}	\bild{25000}

\bild{5000}	\bild{14000}	\bild{40000}

\bild{6000}	\bild{16000}	\bild{50000}

\bild{7000}	\bild{16500}	\bild{100000}

\caption{Evolution of an open clamped rod
{from Experiment~\ref{sect:clamped}.
The total twist amounts to~$8$ initially
and is then reduced to about~$2$ by two self-penetrations
which occur around iteration steps $6{,}000$ and $16{,}500$.
The energy plot is shown in Figure~\ref{fig:clamped-plot} below.}}
\label{fig:clamped}
\end{figure}

\begin{figure}
 \includegraphics[scale=.35]{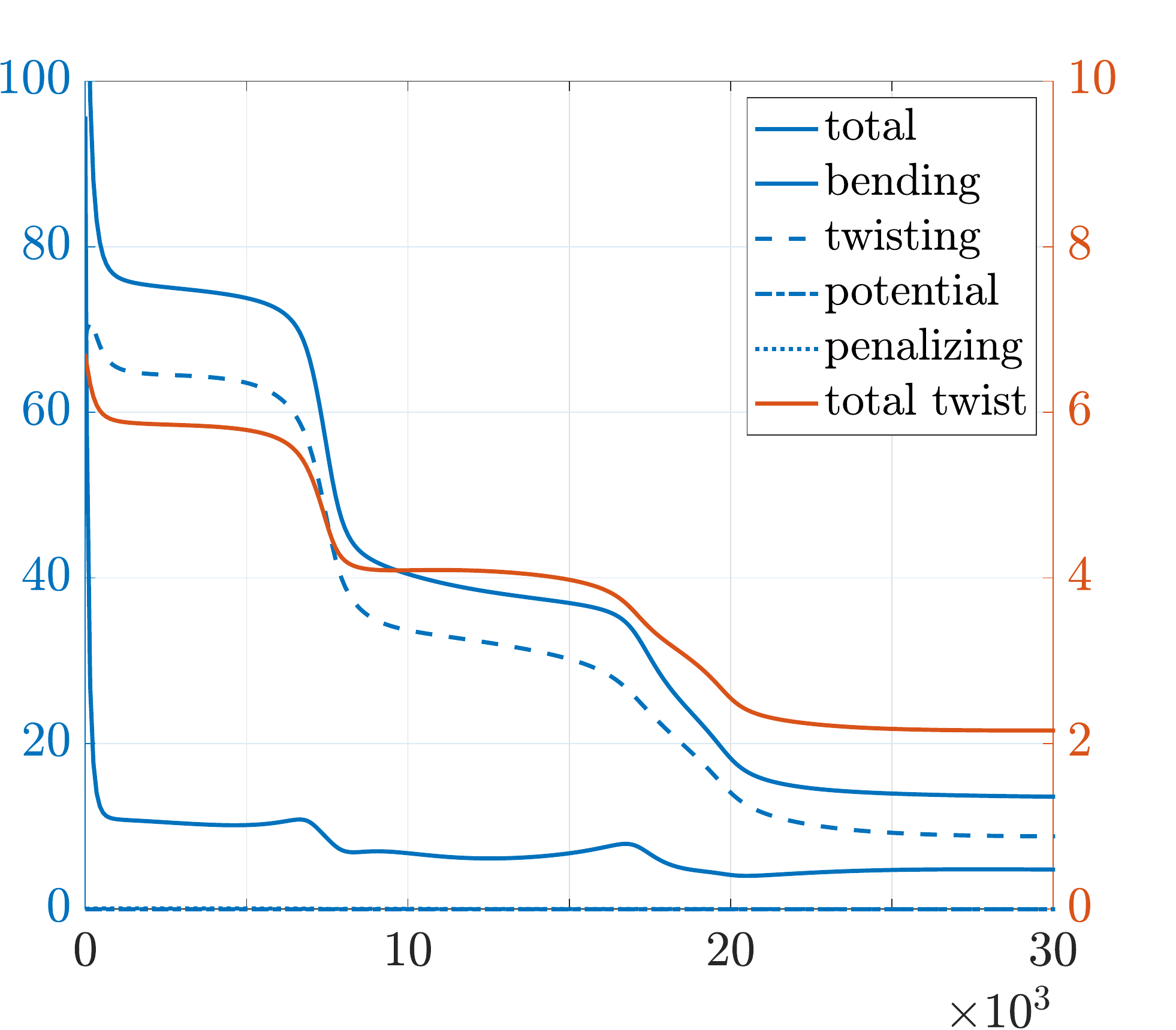}
 \caption{Energy plot {for the evolution from Experiment~\ref{sect:clamped}.
 Reduction of twist occurs following the self-penetrations of the curve.
 Total twist attains the values of~$5$ and~$3$
 around iteration steps $7{,}200$ and $18{,}800$ respectively.}
 The energy profile is virtually constant for the iteration steps $\ge30{,}000$.}
 \label{fig:clamped-plot}
\end{figure}

We can also simulate the evolution of \emph{open} rods.
Our initial curve is planar, namely
\[ {y(s) = }
\begin{pmatrix}
\tfrac 12s \\ \cos s-1
\end{pmatrix}, \qquad s\in[0,4\pi]. \]

This curve is not parametrized by arclength,
with the notation of elliptic integrals introduced in {Experiment}~\ref{sect:f8}
we have $L=4\sqrt 2E(\tfrac\pi2,-2)\approx12.357$.
{No perturbation of the initial rod was used.}

Choosing $\beta_{\ini}=4$ gives an initial total twist of $8$
according to {Section}~\ref{sect:tot-twist}.

The evolution is depicted in Figure~\ref{fig:clamped}, the corresponding
energy plot can be found in Figure~\ref{fig:clamped-plot}.
It seems to become stationary after $30{,}000$ steps
although the total twist could be further reduced, see {Section}~\ref{sect:mintt}.

\subsection{Implementing impermeability}
\label{sect:imper}

\renewcommand{\bild}[1]{\fbox{\includegraphics[scale=.14,trim=350 330 340 300,clip]{f#1.jpg}}\makebox[0ex][r]{\makebox[0ex][r]{\tiny#1}\hspace{0ex} }\,\ignorespaces}

\begin{figure}
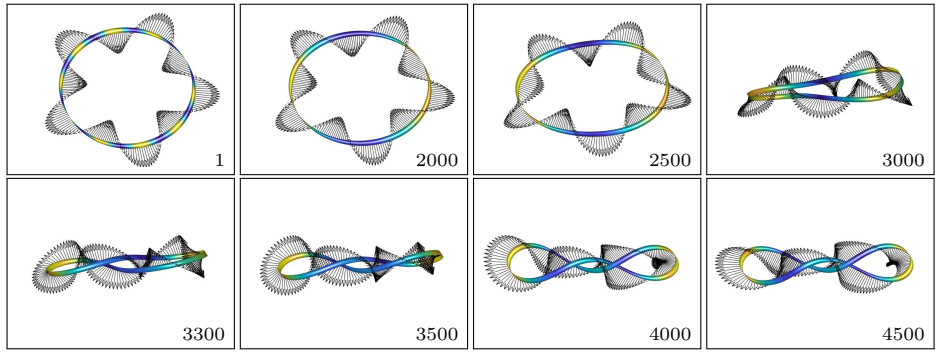


\bild{1}
\bild{2000}
\bild{2500}
\bild{3000}

\bild{3300}
\bild{3500}
\bild{4000}
\bild{4500}

\caption{{In the first part of Experiment~\ref{sect:imper}
we repeat Experiment~\ref{sect:overtwist}
in the presence of impermeability.
As self-penetrations are excluded, we observe the formation of coilings.
The energy plot is shown in Figure~\ref{fig:imper-plot} (left).}}
\label{fig:imper-overtwist}
\end{figure}

\renewcommand{\bild}[1]{\fbox{%
\scalebox{-1}[1]{%
\includegraphics[scale=.13,trim=280 420 160 320,clip]{g#1.jpg}}}\makebox[0ex][r]{\makebox[0ex][r]{\tiny#1}\hspace{0ex} }\,\ignorespaces}

\begin{figure}

\bild{1}	\bild{7500}		\bild{17000}

\bild{500}	\bild{8000}		\bild{17500}

\bild{2000}	\bild{9000}		\bild{18000}

\bild{3000}	\bild{10000}	\bild{20000}

\bild{4000}	\bild{12000}	\bild{25000}

\bild{5000}	\bild{14000}	\bild{40000}

\bild{6000}	\bild{16000}	\bild{50000}

\bild{7000}	\bild{16500}	\bild{100000}

\caption{{In the second part of Experiment~\ref{sect:imper}
we repeat Experiment~\ref{sect:clamped}
in the presence of impermeability.
As self-penetrations are excluded, we observe the formation of a plectoneme.
The viewer's position has been rotated by $90$ degrees.
The energy plot is shown in Figure~\ref{fig:imper-plot} (right).}}
\label{fig:imper-clamped}
\end{figure}

\begin{figure}
 \includegraphics[scale=.33]{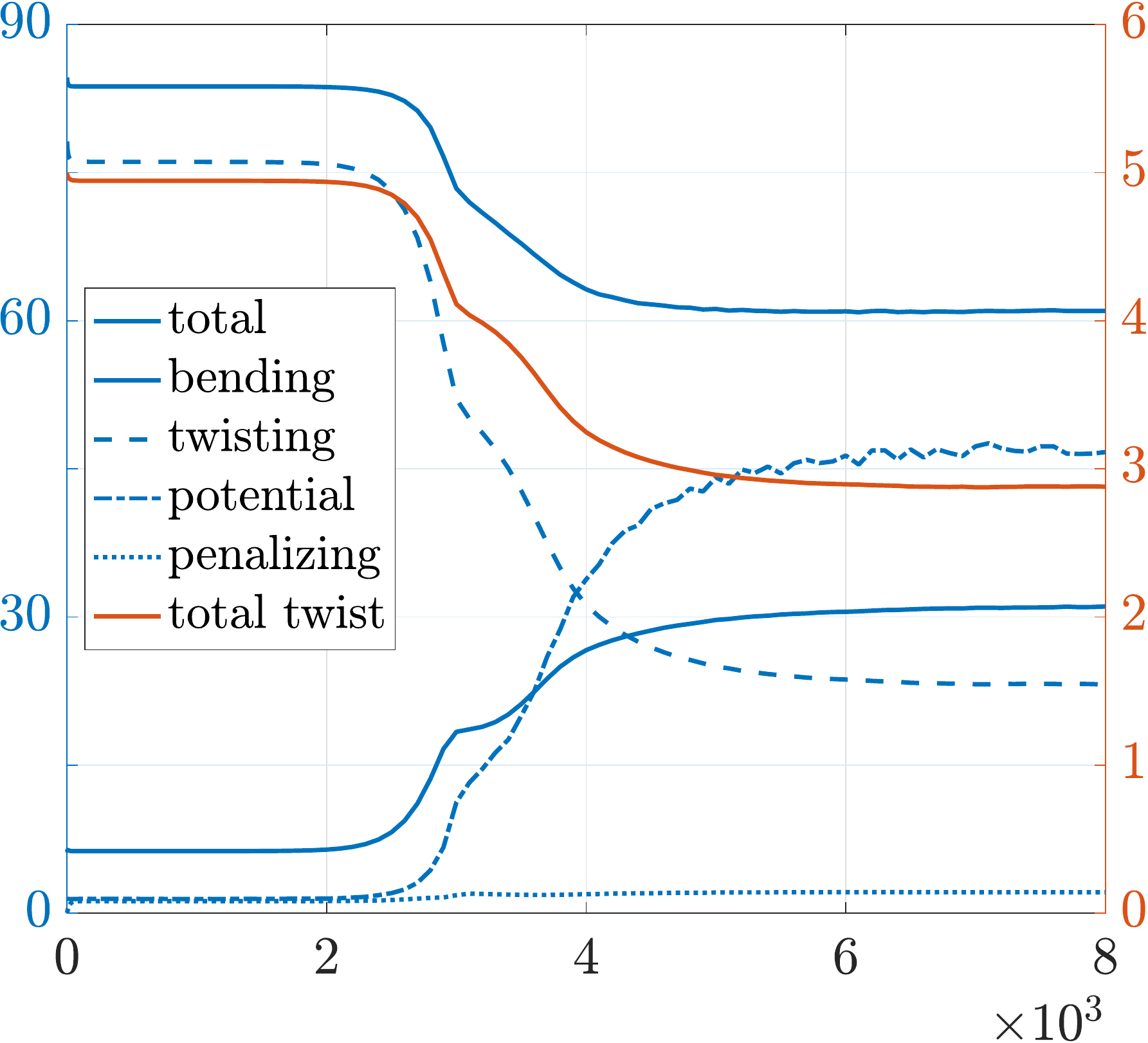}
\hfill
 \includegraphics[scale=.33]{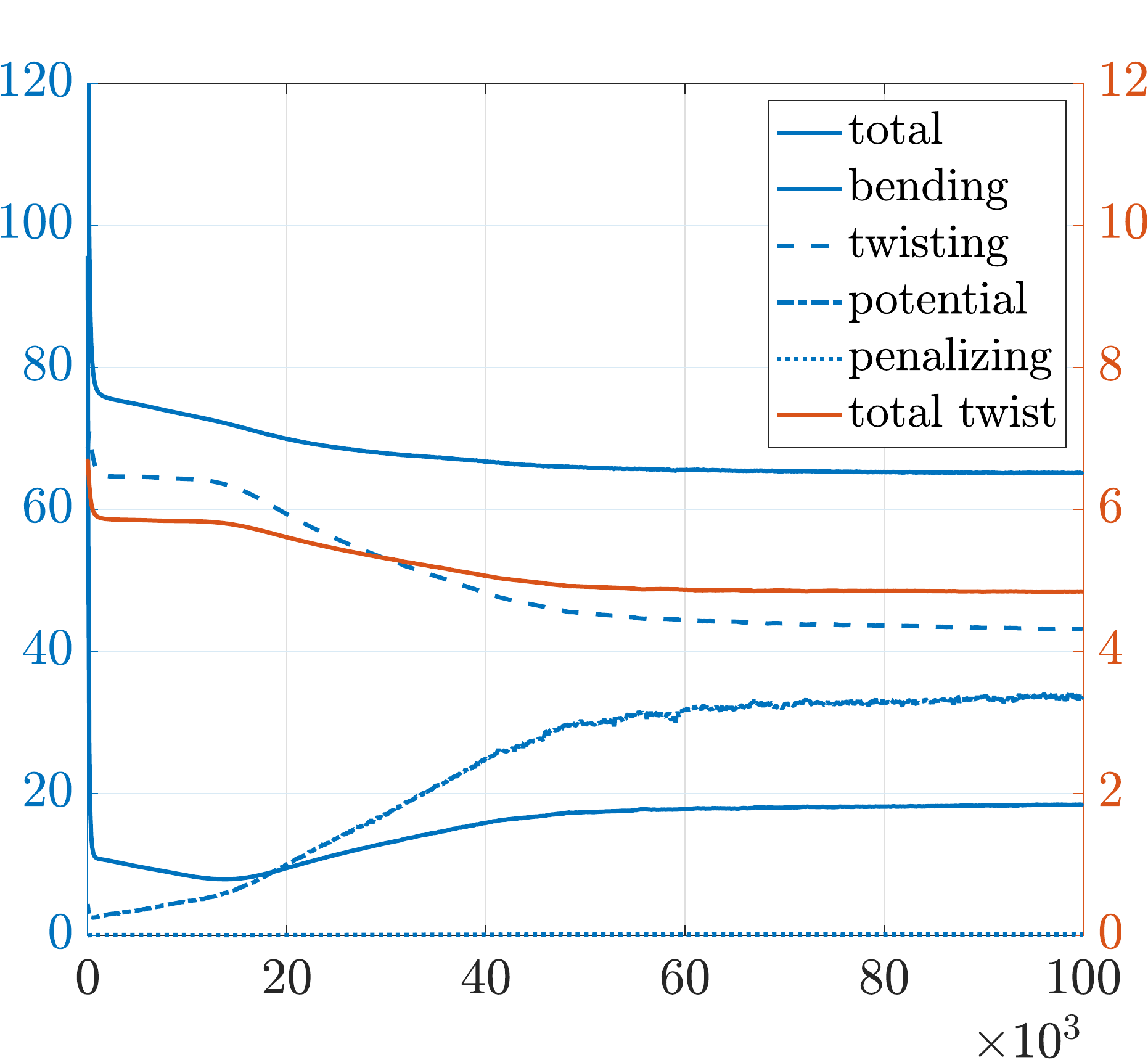}
 \caption{Energy plots for {Experiment~\ref{sect:imper}} {(a) and (b)}.}
 \label{fig:imper-plot}
\end{figure}

In order to preclude rods from self-penetrations we consider
the modified total energy $\energy+\varrho\TP$
where $\varrho\ge0$ and $\TP$ denote the \emph{tangent-point functional}
\begin{equation}\label{eq:tp}
 u\mapsto\frac1{2^{q}q}\int_{0}^{L}\int_{0}^{L}\frac{\d s\d\tilde s}{r(\curve(\tilde s),\curve(s))^{q}},
 \qquad q>2.
\end{equation}
Here $s,\tilde s$ denotes arclength parameters,
and $r(\curve(\tilde s), \curve(s))$ is the radius of the circle that is tangent to $\curve$ at the point $\curve(\tilde s)$ and that intersects with $\curve$ in~$\curve(s)$.

As many so-called knot energies~\cite{oha:book},
the tangent-point energies provide
a monotonic uniform bound on the bi-Lipschitz constant.
This implies in particular that the energy values of a sequence of
embedded curves converging
to a curve with a self-intersection will necessarily blow up.

The tangent-point energies have been proposed by
Gonzalez and Maddocks~\cite{GM99};
the scale invariant case $q=2$ has already been introduced by Buck and Orloff~\cite{BuckOrloff}.
They are defined on
(smooth) embedded curves $\curve:[0,L]\to\R^{n}$
and take values in~$[0,+\infty]$,
see Strzelecki and von der Mosel~\cite{StrMos12} and references therein.
Blatt~\cite{Blatt13} has characterized the energy spaces
in terms of Sobolev--Slobodecki{\u\i} spaces;
regularity aspects are discussed in~\cite{BR15}.

More information on the discretization of the tangent-point functional
can be found in~\cite{bartels-reiter,BRR}.
We cut out a strip of radius $2h_{\max}$ off the diagonal in $[0,L]^{2}$.

We repeat Experiments~\ref{sect:overtwist} and~\ref{sect:clamped}
in the presence of self-avoidance.
{No perturbation was added}
to the initial curves. The parameters are chosen similarly,
see Table~\ref{tab:param}.

 Note that for closed curves
with a closed frame the linking number is preserved throughout
the evolution.
Changes of the total twist will be entirely compensated by
the writhe functional.

\subsection*{Acknowledgments}

S\"oren Bartels acknowledges support via the DFG Research
Unit FOR 3013 {\em Vector- and Tensor-Valued Surface PDEs}.
Philipp Reiter was partially supported by
the German Research Foundation (DFG), Grant RE~3930/1--1.

\bibliographystyle{abbrvhref}
\bibliography{bib_project}

\end{document}